\documentclass[12pt]{amsart}

\usepackage{amscd}
\usepackage{amsmath}
\usepackage{amsfonts}
\usepackage{amssymb}
\usepackage[mathscr]{eucal}
\usepackage[T1]{fontenc} 

\hoffset -1cm
\voffset 1cm
\textheight 21cm \textwidth 14cm

\newtheorem{theorem}{Theorem}

\newtheorem{corollary}{Corollary} 
\newtheorem{definition}{Definition}
 
\newtheorem{lemma}{Lemma}

\newtheorem{proposition}{Proposition}
\newtheorem{remark}{Remark}
\newtheorem{remarks}[remark]{Remarks}  

\newtheorem{thmA}{Theorem}

 \def\cat{{\rm{CAT}}$(0)$ }

\def\T{\mathcal T}
\def\ssm{\smallsetminus}

\def\R{\mathbb R}

\def\E{\mathbb E}
\def\Si{\Sigma}

\def\fix{\hbox{{\rm{Fix}}}}

\def\L{\Lambda}

\def\<{\langle}
\def\>{\rangle}

\def\-{\underline}

\def\Z{\mathbb Z}

\def\S{\Sigma}

\def\G{\Gamma}

\def\g{\gamma}

\def\isom{\text{\rm{Isom}}}

\def\ball{\text{\rm{Ball}}}
\def\-{\overline}

\begin{document}

\title[Semisimple actions of mapping class groups on CAT$(0)$ spaces]
{Semisimple actions of mapping class groups on CAT$(0)$ spaces} 

\author[Martin R. Bridson]{Martin R.~Bridson}
\address{Mathematical Institute,
24-29 St Giles',
Oxford OX1 3LB, U.K. }
\email{bridson@maths.ox.ac.uk}

\subjclass{20F67, 57M50}
\keywords{mapping class groups, {\rm{CAT}}$(0)$ spaces, dimension,
fixed-point theorems}
\thanks{This research was supported by a Senior
Fellowship from the EPSRC and a Royal
Society Wolfson Research Merit Award.
To appear in "The Geometry of Riemann Surfaces", LMS Lecture Notes 368.}

\begin{abstract} Let $\S$ be an orientable surface of finite type
and let ${\rm{Mod}}(\Sigma)$ be its mapping class group. We consider actions of ${\rm{Mod}}(\Sigma)$ by semisimple
isometries on complete {\rm{CAT}}$(0)$ spaces. If the genus of $\S$ is at least $3$, then in
any such action all Dehn twists act as elliptic isometries.
The action of ${\rm{Mod}}(\Sigma)$ on the completion of Teichm\"uller space
with the Weil-Petersson metric shows that there are interesting actions of this type.
Whenever the mapping class group of a
closed orientable surface of genus $g$ acts by semisimple isometries
on a complete {\rm{CAT}}$(0)$ space of dimension less than $g$ it must fix a point.
The mapping class group of a closed surface of genus $2$ acts properly by semisimple isometries
on a complete {\rm{CAT}}$(0)$ space of dimension $18$.
\end{abstract}

\maketitle

\section{Introduction} 
This article concerns actions of mapping class groups
by isometries on complete {\rm{CAT}}$(0)$ spaces. It records the contents of the lecture that I gave at Bill Harvey's 65th
birthday conference at Anogia, Crete in July 2007.

A {\rm{CAT}}$(0)$ space is a
geodesic metric space in which each geodesic triangle is no fatter than a triangle in the
Euclidean plane that has the same edge lengths (see Definition \ref{d:cat}). Classical
examples include complete 1-connected Riemannian manifolds with non-positive
sectional curvature and metric trees. The isometries of a {\rm{CAT}}$(0)$ space $X$ divide naturally into
two classes: the {\em{semisimple}} isometries are those for which there exists $x_0\in X$ such that
$d(\gamma.x_0, \, x_0) = |\g|$ where $|\g|:=\inf\{d(\gamma.y, \, y) \mid y\in X\}$; the remaining
isometries are said to be
{\em{parabolic}}. Semisimple isometries are further divided into {\em{hyperbolics}}, for which $|\g|>0$,
and {\em{elliptics}}, which have fixed points. Parabolics can be divided
into {\em{neutral parabolics}}, for which $|\g|=0$, and non-neutral parabolics.
If $X$ is a polyhedral space with only finitely many isometry types of cells, then all isometries
of $X$ are semisimple \cite{mrb:ss}.

E.~Cartan \cite{cartan} proved that the natural metric on a symmetric space of non-compact
type has non-positive sectional curvature. This gives an action of the mapping 
class group on a complete {\rm{CAT}}$(0)$ space:   
the morphism ${\rm{Mod}}(\Sigma_g)\to {\rm{Sp}}(2g,\Z)$ induced by the
action of ${\rm{Mod}}(\Sigma_g)$ on the first homology of $\S_g$, the closed orientable
surface of genus $g$,
gives an action of ${\rm{Mod}}(\Sigma_g)$ by isometries
on the symmetric space   for the symplectic group ${\rm{Sp}}(2g,\R)$.
In this
action,
 the Dehn twists in non-separating curves act as neutral parabolics.

In the  fruitful analogy between mapping class groups and lattices
in semisimple Lie groups,
Teichm\"uller space takes the role of the symmetric space. Unfortunately,
the Teichm\"uller metric does not have non-positive curvature \cite{masur1}.
On the other hand, the Weil-Petersson metric, although not complete,
 does have non-positive curvature
 \cite{wolp75, wolp87}. Since
the completion of a {\rm{CAT}}$(0)$ space is again a {\rm{CAT}}$(0)$ space \cite{BH} p.187,
it seems natural to complete the Weil-Petersson metric and to examine the action of the
mapping class group on the
completion   $\overline \T$  in order to elucidate the structure of the group.

\begin{thmA}
Let $\S$ be an orientable surface of finite type with negative
euler characteristic and empty boundary. The action of ${\rm{Mod}}(\Sigma)$ on the completion
of Teichm\"uller space in the Weil-Petersson metric is by semisimple
isometries. All Dehn twists act as elliptic isometries (i.e.~have fixed
points).
\end{thmA}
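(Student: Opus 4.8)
The plan is to combine the known description of the completion $\overline{\T}$ of Teichm\"uller space with the Nielsen-Thurston classification of mapping classes. Recall (Masur) that $\overline{\T}$ is \emph{augmented Teichm\"uller space}: a disjoint union of strata $\T_\sigma$ indexed by the simplices $\sigma$ of the curve complex of $\S$, with $\T_\emptyset=\T$, where $\T_\sigma$ parametrises marked noded surfaces obtained by pinching the curves of $\sigma$. Three features of this picture are what I would use. (i) $\mod(\S)$ acts on $\overline{\T}$ by isometries, permuting the strata according to its action on the curve complex. (ii) The Dehn twist in a curve of $\sigma$ acts trivially on $\T_\sigma$, since pinching a curve absorbs twisting about it. (iii) The closure $\overline{\T_\sigma}$ is a closed convex subset of $\overline{\T}$ (Wolpert; see also Yamada) which, with the $\ell^2$ product metric, is isometric to the product of the Weil-Petersson completions of the Teichm\"uller spaces of the components of $\S$ cut along $\sigma$; in particular it is again an augmented Teichm\"uller space, of strictly smaller complexity.

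The assertion about twists follows from (i) and (ii) alone. If $\gamma$ is an essential simple closed curve then $\T_{\{\gamma\}}\neq\emptyset$ --- this is where the hypothesis $\chi(\S)<0$ is used, ensuring that the pieces of $\S$ cut along $\gamma$ still carry hyperbolic structures --- and by (ii) the twist $T_\gamma$ fixes $\T_{\{\gamma\}}$ pointwise, so $|T_\gamma|=0$ is realised and $T_\gamma$ is elliptic. (If $\gamma$ is inessential then $T_\gamma=1$, which is trivially elliptic.)

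For the general assertion I would induct on the complexity $3g-3+n$ of $\S$, treating the three Nielsen-Thurston types in turn. A periodic $h$ has a finite orbit in $\overline{\T}$ and fixes its circumcentre, so $h$ is elliptic. A pseudo-Anosov $h$ preserves a Weil-Petersson geodesic line along which it translates by a positive amount and is therefore a hyperbolic isometry; this is the one substantial input, which I would import from the classification of Weil-Petersson isometries of Daskalopoulos and Wentworth (see also Wolpert). Finally, a reducible $h$ preserves its canonical reduction system $\sigma$ (a nonempty $h$-invariant multicurve), hence the closed convex set $\overline{\T_\sigma}=\prod_j\overline{\T}_j$ of (iii), on which it permutes the factors; passing to a power $h^p$ that preserves each factor, $h^p$ acts on $\overline{\T}_j$ as a mapping class of a surface of strictly smaller complexity, hence semisimple by induction. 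A product isometry of complete {\rm{CAT}}$(0)$ spaces with semisimple factors is semisimple --- its min-set is the product of the factor min-sets --- so $h^p$ is semisimple on $\overline{\T_\sigma}$; and since $\overline{\T_\sigma}$ is convex and the nearest-point projection $\overline{\T}\to\overline{\T_\sigma}$ is $h^p$-equivariant and $1$-Lipschitz, $|h^p|$ takes the same value on $\overline{\T}$ and on $\overline{\T_\sigma}$ and is realised. Thus $h^p$ is semisimple on $\overline{\T}$, and hence so is $h$, a root of a semisimple isometry of a complete {\rm{CAT}}$(0)$ space being again semisimple (restrict to $\mathrm{Fix}(h^p)$, respectively to a factor $\mathrm{Min}(h^p)\cong Y\times\R$ on which $h$ acts with finite order in the $Y$-direction).

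The step I expect to be the main obstacle is the pseudo-Anosov case: constructing an invariant axis inside the stratified, and intrinsically incomplete, space $\overline{\T}$ is delicate and rests on fine convexity properties of the Weil-Petersson metric in the directions transverse to the boundary strata. By contrast, once Masur's structure theorem and the convexity of the strata closures are granted, the reducible case is essentially bookkeeping and the statement about Dehn twists is immediate.
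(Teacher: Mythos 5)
Your argument is essentially the paper's own proof: both rest on Masur's stratification of $\overline{\T}$, the convexity of the strata (Daskalopoulos--Wentworth), the fact that twists in the pinched curves act trivially on the corresponding stratum, the Daskalopoulos--Wentworth theorem that pseudo-Anosovs are hyperbolic with axes, and the fact that an isometry of a complete CAT$(0)$ space is semisimple if a proper power is. The only cosmetic difference is that you organise the reducible case as an induction on complexity, where the paper invokes Thurston's result that a power of any mapping class is a multitwist, a pseudo-Anosov, or reducible acting as a pseudo-Anosov on a component, and then reads off hyperbolicity directly from the product structure of the convex stratum.
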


This theorem is a restatement of results in the literature
but I wanted to highlight it  as
 a motivating example.
 The essential points in the proof are described in Section \ref{s:WP}.
The fact that the Dehn twists have fixed points in the action on $\-\T$
is a manifestation of a general phenomenon:

\begin{thmA}\label{t:B} Let $\S$ be an orientable surface of finite type with genus
$g\ge 3$. Whenever ${\rm{Mod}}(\Sigma)$ acts by semisimple isometries on a complete
{\rm{CAT}}$(0)$ space, all Dehn twists in ${\rm{Mod}}(\Sigma)$ act as elliptic isometries
(i.e.~have fixed points).
\end{thmA}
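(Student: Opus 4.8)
The plan is to argue by contradiction, combining a standard feature of commuting semisimple isometries with the lantern relation. Suppose that some Dehn twist $T_c$ acts as a hyperbolic isometry, so $\ell:=|T_c|>0$ and $T_c$ has no fixed point. Since the action is by semisimple isometries, $\mathrm{Min}(T_c)$, the set of points realising the translation length $\ell$, is nonempty, closed and convex, and it splits isometrically as $Y\times\R$ with $T_c$ acting by $(y,t)\mapsto(y,t+\ell)$; the axes of $T_c$ are the lines $\{y\}\times\R$ \cite{BH}. Any isometry commuting with $T_c$ preserves $\mathrm{Min}(T_c)$ and carries axes of $T_c$ to axes of $T_c$, and a short argument --- the $\R$-coordinate cannot be reflected, since the isometry commutes with a nontrivial translation along it --- shows that it acts on $Y\times\R$ as $(y,t)\mapsto(\psi(y),\,t+s)$ for some $\psi\in\isom(Y)$ and $s\in\R$. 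Thus $g\mapsto s$ defines a homomorphism $\rho\colon Z(T_c)\to\R$ on the centraliser of $T_c$, with $\rho(T_c)=\ell\neq0$. Because $\R$ is torsion-free, $\rho$ factors through $H_1(Z(T_c);\Z)$, so the image of $T_c$ there has infinite order; in particular $T_c$ is not a product of commutators of elements of $Z(T_c)$. Hence it suffices to prove that, for every Dehn twist $T_c$, there is a subgroup $H\le Z(T_c)$ with $T_c\in[H,H]$.

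To produce such an $H$ I would pass to subsurfaces. Let $\widehat\Sigma$ be obtained from $\Sigma$ by deleting an open annular neighbourhood of $c$. Any mapping class supported in $\widehat\Sigma$ commutes with $T_c$, so the natural map carries ${\rm Mod}(\widehat\Sigma)$ into $Z(T_c)$, and it sends the Dehn twist about either of the two new boundary curves onto $T_c$. I would then take $H$ to be the image of ${\rm Mod}(\widehat\Sigma)$ when $c$ is non-separating, and the image of the mapping class group of a complementary component of genus at least $2$ when $c$ is separating; such a component exists because the genera of the two pieces add up to $g\ge3$. It remains to show that in that subsurface mapping class group the Dehn twist about one of the new boundary curves --- a curve parallel to $c$ --- is a product of commutators.

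This I would obtain from the lantern relation $T_{\partial_0}T_{\partial_1}T_{\partial_2}T_{\partial_3}=T_xT_yT_z$, which holds whenever $\partial_0,\dots,\partial_3$ are the boundary curves and $x,y,z$ the three standard interior curves of an embedded four-holed sphere. The idea is to embed a four-holed sphere in the relevant cut-open surface so that $\partial_0$ is a boundary curve parallel to $c$ while $\partial_1,\partial_2,\partial_3,x,y,z$ are all non-separating there. By the change-of-coordinates principle any two non-separating simple closed curves in a surface with boundary lie in a single mapping class group orbit, so these six twists are mutually conjugate and share a common image $\beta$ in the abelianisation of the ambient mapping class group; the lantern relation then reads $[T_{\partial_0}]+3\beta=3\beta$, i.e.\ $[T_{\partial_0}]=0$. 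As $T_{\partial_0}$ maps to $T_c$, this gives $T_c\in[H,H]$, contradicting the first paragraph. Since the action is semisimple and $T_c$ cannot be hyperbolic, $T_c$ must be elliptic, which is the assertion of the theorem.

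The step I expect to be the main obstacle --- and the only one that really uses $g\ge3$ --- is the surface-topology bookkeeping just invoked: one must position the four-holed sphere inside the cut-open surface with all six auxiliary curves genuinely non-separating there, and carry out a short case analysis when $c$ is separating (for instance when $g=3$ and $c$ bounds a one-holed torus, which must be set aside in favour of the genus-$2$ piece). An Euler-characteristic count shows that $g\ge3$ leaves just enough room; for $g=2$ it does not, and one is forced to use both boundary curves parallel to $c$ as faces of the four-holed sphere, so the lantern relation then only controls $T_c^2$ and not $T_c$ --- consistent with the proper semisimple action of ${\rm Mod}(\Sigma_2)$ on a {\rm CAT}$(0)$ space mentioned in the abstract, in which the infinite-order Dehn twists are necessarily hyperbolic.
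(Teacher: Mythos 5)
Your proof is correct and follows essentially the same route as the paper: your first paragraph is exactly the paper's key proposition that a hyperbolic isometry has infinite order in the abelianisation of its centraliser (via the splitting ${\rm{Min}}(\gamma)=Y\times\R$ and the translation homomorphism to $\isom_+(\R)$), and your second half establishes the same group-theoretic input, namely that a Dehn twist dies in the abelianisation of a subgroup of its centraliser obtained by cutting along $c$. The only divergence is that where the paper cites the finiteness of $H_1$ of mapping class groups of genus $\ge 2$ subsurfaces (Korkmaz's survey), you inline the standard lantern-relation proof of that fact; this is sound, and in the separating case arguably tidier, since you only need $[T_c]=0$ in $H_1$ of the genus-$\ge 2$ complementary piece rather than finiteness of the full abelianisation of the centraliser.
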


One proves this theorem by comparing the
centralizers of Dehn twists in mapping class groups
with the  centralizers of
hyperbolic elements in isometry groups of {\rm{CAT}}$(0)$ spaces --- see Section \ref{s:B}.
The action of ${\rm{Mod}}(\Sigma)$ on the symmetric space for
${\rm{Sp}}(2g,\R)$
shows that one must weaken the conclusion of Theorem \ref{t:B} if one wants to drop the
hypothesis that ${\rm{Mod}}(\Sigma)$ is acting by semisimple isometries. The appropriate conclusion is that the Dehn twists act either as elliptics or
as neutral parabolics (see Theorem \ref{t:paras}).

Theorem \ref{t:B} provides information about actions of finite-index
subgroups of mapping class groups. For if $H$ is a subgroup of index $n$
acting by semisimple isometries on a {\rm{CAT}}$(0)$ space $X$, then the induced
action of ${\rm{Mod}}(\Sigma)$ on $X^n$ is again by semisimple isometries:
$\g\in{\rm{Mod}}(\Sigma)$ will be elliptic (resp.~hyperbolic) in the induced action
if and only if
any power of $\g$ that lies in $H$ was elliptic (resp.~hyperbolic) in the original action (cf.~remark \ref{r:abelian}).

The situation for genus 2 surfaces is quite different, as I shall explain
in Section \ref{s:genus2}.

\begin{thmA}\label{t:M_2 acts}
The mapping class group of a closed orientable surface of genus $2$ acts properly by
semisimple isometries on a complete \cat space of dimension
$18$.
\end{thmA}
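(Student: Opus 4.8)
The plan is to present $\mod(\Sigma_2)$, up to a central subgroup of order $2$, as the mapping class group of a sphere with six marked points, to use the fact that a natural subgroup of index $6$ in the latter is known to act properly and cocompactly on a $3$-dimensional \cat complex, and then to recover an action of the whole group by the device of inducing up from a finite-index subgroup --- the device recalled in the Introduction --- at the cost of multiplying the dimension by the index.

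First I would invoke the Birman--Hilden theory in genus $2$: the hyperelliptic involution $\iota$ generates the centre of $\mod(\Sigma_2)$, and there is a central extension
\[
1\longrightarrow \Z/2 \longrightarrow \mod(\Sigma_2)\longrightarrow \mod(\Sigma_{0,6})\longrightarrow 1,
\]
where $\Sigma_{0,6}$ is a sphere with six marked points. Since the kernel is finite, a proper action of a subgroup of index $n$ in $\mod(\Sigma_{0,6})$ pulls back to a proper action of its preimage, the subgroup of index $n$ in $\mod(\Sigma_2)$; and cocompactness is preserved, the two groups acting through the same quotient.

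Next I would isolate the relevant subgroup. The action of $\mod(\Sigma_{0,6})$ on the six marked points gives a surjection onto the symmetric group $S_6$; let $G$ be the preimage of the stabiliser of one point, a subgroup of index $6$. Deleting that point identifies $G$ with the mapping class group of a $5$-punctured plane --- equivalently, $B_5$ modulo its centre --- a group of virtual cohomological dimension $3$. I would then appeal to the known \cat geometry of braid groups of small rank to obtain a proper cocompact action of $G$ on a $3$-dimensional piecewise-Euclidean \cat complex $X$ that has only finitely many isometry types of cells (such a complex is complete, and all of its isometries are semisimple \cite{mrb:ss}). By the second paragraph, the subgroup $\widetilde{G}\le\mod(\Sigma_2)$ of index $6$ lying over $G$ also acts properly and cocompactly on $X$. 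Inducing this action up, $\mod(\Sigma_2)$ acts on the $\ell^2$-product $X^{6}$, permuting the six factors and twisting them by elements of $\widetilde{G}$; the action is proper (a point-stabiliser is an intersection of conjugates of the finite cell-stabilisers of $\widetilde{G}$ on $X$, together with a finite permutation part) and cocompact, and $X^{6}$ is a complete \cat space of dimension $6\cdot 3 = 18$. Being again a polyhedral complex with finitely many isometry types of cells, $X^{6}$ has all isometries semisimple \cite{mrb:ss}, so the action of $\mod(\Sigma_2)$ is by semisimple isometries, as required.

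The steps using Birman--Hilden theory, the passage to the index-$6$ subgroup, and the induction are formal. The main obstacle is the geometric input of the third paragraph: one must produce a proper cocompact action of the mapping class group of the $5$-punctured plane on a \cat complex of dimension \emph{exactly} $3$. The natural \cat structures on $B_5$ have dimension $4 = \mathrm{cd}(B_5)$, so this requires passing carefully to a $3$-dimensional model for $B_5$ modulo its centre; it is precisely this sharpening that makes the product $X^{6}$ come out with dimension $18$ rather than $24$.
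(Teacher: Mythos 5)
Your architecture is exactly the paper's: the Birman--Hilden central extension $1\to\Z/2\to{\rm{Mod}}(\Sigma_2)\to{\rm{Mod}}(\Sigma_{0,6})\to 1$, the identification of the index-$6$ point-stabiliser in ${\rm{Mod}}(\Sigma_{0,6})$ with $B_5$ modulo its infinite cyclic centre, and induction up the index-$6$ inclusion, multiplying the dimension by $6$. But the step you yourself flag as ``the main obstacle'' --- producing a $3$-dimensional complete {\rm{CAT}}$(0)$ space on which $B_5/Z(B_5)$ acts properly by semisimple isometries --- is left as an appeal to ``the known {\rm{CAT}}$(0)$ geometry of braid groups of small rank'', and, as you observe, the known models for $B_5$ are $4$-dimensional; so as written the argument only yields dimension $24$. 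This is a genuine gap, and it is precisely the point the paper has to address. The fix is not a new $3$-dimensional model but a splitting theorem: the Brady--McCammond complex is a compact non-positively curved piecewise-Euclidean complex of dimension $4$ with \emph{no free faces}, and by \cite{BH} II.6.15(1) and II.6.16 the presence of an infinite-order central element in a group acting properly cocompactly on such a space forces the universal cover to split isometrically as $Y\times\R$, with the centre translating the $\R$-factor; then $B_5/Z(B_5)$ acts properly (II.6.10(4) of \cite{BH}) and by semisimple isometries (II.6.9) on the $3$-dimensional factor $Y$. It is this splitting-off of the central $\R$ that drops the dimension from $4$ to $3$.

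Two smaller points. First, $Y$ is a convex factor of a universal cover, not itself presented as a polyhedral complex with finitely many isometry types of cells, so semisimplicity of the induced action on $Y^6$ should be obtained as in the footnote to Remark \ref{r:abelian} (a power of each element preserves the factors; \cite{BH} p.~231--232) rather than from \cite{mrb:ss}. Second, you assert that the induced action is cocompact; induction from a finite-index subgroup does not preserve cocompactness in general (already for the index-$2$ inclusion $\Z\le\Z$ acting on $\R$), and cocompactness is neither claimed in the statement nor needed --- properness of the action on $Y^6$ together with finiteness of the kernel $\langle\tau\rangle$ is all that is required.
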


The properness of the action in Theorem C contrasts sharply
with the nature of the actions in Theorems A, B and D.

I do not know if the mapping class group of a surface of genus $g>1$
can admit an action by semisimple
isometries, without a
global fixed point, on a complete {\rm{CAT}}$(0)$ space whose dimension is less than
that of the Teichm\"uller space. However, one can give a lower bound
on this dimension that is linear in $g$ (cf.~Questions 13 and 14
of \cite{BV} and Problem 6.1 in \cite{farb}).
To avoid complications, I shall state this only in the closed case.

\begin{thmA}\label{t:D}
Whenever the mapping class group of a
closed orientable surface of genus $g$ acts by semisimple isometries
on a complete {\rm{CAT}}$(0)$ space of dimension less than $g$ it fixes a point.
\end{thmA}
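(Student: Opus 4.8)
The plan is to prove that, under the dimension hypothesis, any action of $\modg$ by semisimple isometries on a complete \cat space $X$ has a global fixed point, by promoting the fixed points of individual Dehn twists to a fixed point of the whole group. The cases $g\le 2$ are disposed of separately: then $\dim X\le 1$, so $X$ is an $\R$-tree, and $\modg$ fixes a point in every action on an $\R$-tree (it has no non-trivial splitting and, having finite abelianisation, neither fixes an end nor surjects onto a surface group). So assume $g\ge 3$. By Theorem~\ref{t:B}, every Dehn twist $T_c$ acts as an elliptic isometry; write $F_c=\fix(T_c)$, a non-empty closed convex subspace of $X$. Since $\modg$ is generated by Dehn twists, it will fix a point once one knows that $\bigcap_c F_c\neq\emptyset$, the intersection taken over all isotopy classes of essential simple closed curves $c$.

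The basic tool is that a finite family of pairwise-commuting elliptic isometries of a complete \cat space has a common fixed point. Indeed, if $\gamma$ and $\delta$ commute and $\delta$ is elliptic then $\gamma$ preserves the closed convex set $\fix(\delta)$, and since $\gamma$ commutes with the nearest-point projection onto $\fix(\delta)$, the image under that projection of any $\gamma$-fixed point is again fixed by $\gamma$; thus $\gamma$ restricts to an elliptic isometry of $\fix(\delta)$, and an induction finishes the argument. Consequently, for every multicurve $M$ the set $F_M:=\bigcap_{c\in M}F_c$ is non-empty, closed and convex; the stabiliser of $M$ in $\modg$ preserves $F_M$ and acts on it with each $T_c$ ($c\in M$) acting trivially, so the mapping class group of the surface obtained by cutting $\S$ along $M$ acts on $F_M$, a complete \cat space of dimension $<g$. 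In particular the Dehn twists along any collection of disjoint curves have a common fixed point; what remains is to combine these with the constraints coming from curves that intersect, and the connectivity of the curve complex is not by itself enough for this.

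This last point is where the dimension hypothesis is essential, and it is the main obstacle. The idea is to run an induction in which the genus of $\S$ is played off against the dimension of the space: cutting $\S$ along a suitably chosen curve (or small multicurve) $M$ and replacing $X$ by $F_M$ strictly lowers the dimension, so that a genus-reduced form of the theorem can be applied to the complementary mapping class group acting on $F_M$, after which one must check that the fixed point so produced is in fact fixed by all of $\modg$. The heart of the matter is the following local statement: two Dehn twists $T_a,T_b$ along curves meeting once --- which generate a copy of the braid group $B_3=\mod(\Sigma_{1,1})$ inside $\modg$ --- cannot act as elliptic isometries on a complete \cat space of dimension $<g$ without having a common fixed point. (This fails if the dimension bound is dropped, which is what makes the argument delicate.) Here one expects to use the Flat Torus Theorem together with the structure theory of semisimple actions on finite-dimensional \cat spaces: a minimal invariant subspace may be assumed to have no Euclidean de~Rham factor and no fixed point at infinity, because $\modg$ ($g\ge3$) is perfect; the braid relation forces the central element $(T_aT_b)^3$, hence $T_aT_b$ itself, to be elliptic; and a count of the independent flat directions forced by the commuting Dehn twists of a handle decomposition of $\S$ then collides with $\dim X<g$. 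I expect the dimension bookkeeping in this final step --- ensuring that the genus-versus-dimension trade really closes up --- to be the genuine difficulty.
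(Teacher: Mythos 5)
Your setup is sound and overlaps with the paper's: the $g\le 2$ cases via Culler--Vogtmann, Theorem~\ref{t:B} to make every Dehn twist elliptic, and the observation that commuting elliptic subgroups have a common fixed point (Corollary~\ref{c:commutes}). But the proof stops exactly where the real work begins, and the mechanism you propose for that step would not work. The paper's argument rests on three ingredients you do not have: (i) a Helly-type theorem (Proposition~\ref{t:helly}) saying that in a complete \cat space of dimension at most $n$, closed convex sets that meet $(n+1)$ at a time meet globally --- this is what reduces the global fixed point to fixed points of \emph{small} subsets of a finite generating set (Corollary~\ref{c:basic}), so one never needs the intersection over all isotopy classes of curves; (ii) the Conjugate Bootstrap (Corollary~\ref{c:conjug}), which is where the dimension hypothesis actually bites: if $n$ mutually commuting conjugates of a set $S$ act on a space of dimension less than $nk$ and every $k$-element subset of $S$ has a fixed point, then every finite subset of $S$ does; and (iii) the surface topology that feeds the bootstrap, namely Lickorish's generating set of $3g-1$ twists together with the fact (Proposition~\ref{p:lick}, Lemma~\ref{l:disjoint}) that a connected $2\ell$-element subset is supported in a subsurface of genus about $\ell$ with connected complement, of which $\lfloor g/\ell\rfloor$ disjoint copies fit into $\Sigma_g$. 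An induction on $|S|$ for $|S|\le g$ then closes up. Your ``local statement'' about two twists along curves meeting once is indeed a true consequence of this machinery (it is the case $\ell=1$, where $g$ disjoint handles give $g$ commuting conjugates in a space of dimension $<g$), but it is an output of the bootstrap, not an input one can establish independently by your suggested route.

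Concretely, the substitute you propose --- the Flat Torus Theorem and a count of ``independent flat directions'' colliding with $\dim X<g$ --- does not apply here: flats are produced by commuting \emph{hyperbolic} isometries, whereas by Theorem~\ref{t:B} all the twists in play are elliptic, so no flat is forced and no dimension count of that kind gets off the ground. Likewise the appeal to perfectness of $\modg$ to rule out fixed points at infinity and Euclidean de~Rham factors is not needed in the paper's argument and does not by itself yield the pairwise statement. You have correctly identified \emph{where} the difficulty lies (combining fixed points of intersecting twists, and the ``dimension bookkeeping''), but the content of the theorem is precisely the resolution of that difficulty, and it is absent from the proposal.
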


Here, "dimension'' means topological covering dimension. An
outline of the proof is given in Section \ref{s:g-bound}; the details are given in \cite{mrb:g-bound}. The  strategy of proof is based on
the "ample duplication" criterion in \cite{mrb:helly}.
The semisimple hypothesis can be weakened:
it is sufficient to assume that there are no non-neutral parabolics, cf.~Theorem \ref{t:paras}.

I make no claim regarding the sharpness of
the dimension bounds in  Theorems \ref{t:M_2 acts} and \ref{t:D}.
\bigskip

The lecture on which this paper
is based was a refinement
of a lecture that I gave on 8 December 2000 in
Bill Harvey's seminar at King's College London.
Bill worked tirelessly over many years to maintain a  geometry  and
topology seminar
in London. Throughout that time he shared many insights with visiting
researchers and always entertained them generously. His mathematical
writings display the same generosity of spirit: he has written clearly and
openly about his ideas rather than hoarding them until some arcane
goal was achieved.  The  benefits
of this openness are most clear
in his highly prescient and influential papers
introducing the curve complex \cite{bill, bill2}.

The book \cite{bill-book}
on discrete groups
and automorphic forms that Bill edited in 1977 had a great
influence on me when I was a graduate study.
At a more personal level, he and his wife
Michele have been immensely kind to me and my family over
many years. It is
therefore with the greatest pleasure that I dedicate these
observations about the mapping class group to him on the
occasion of his sixty fifth birthday.

\section{Centralizers, fixed points and Theorem B}\label{s:B}

\begin{definition}\label{d:cat} Let $X$ be a  geodesic metric space.
A geodesic triangle $\Delta$ in $X$ consists of three points $a,b,c\in X$ and
three geodesics $[a,b],\, [b,c],\, [c,a]$. Let $\-\Delta\subset\E^2$ be a triangle
in the Euclidean plane with the same edge lengths as $\Delta$
and let $\overline x\mapsto
x$ denote the map  $\-\Delta\to\Delta$
that sends each side of $\-\Delta$ isometrically
onto the corresponding side of $\Delta$.
One says that $X$ is a
{\rm{CAT}}$(0)$ space if for all $\Delta$ and all $\-x,\-y\in\-\Delta$ the inequality
 $d_X(x,y)\le d_{\E^2}(\-x, \-y)$ holds.

Note that in a {\rm{CAT}}$(0)$ space there is a unique geodesic $[x,y]$ joining
each pair of points $x,y\in X$.
A subspace $Y\subset X$ is said to be {\em{convex}} if
$[y,y']\subset Y$ whenever $y,y'\in Y$.
\end{definition}

\begin{lemma}\label{l:axis} If $X$ is a complete {\rm{CAT}}$(0)$ space
then an isometry $\gamma$ of $X$ is hyperbolic if and only if $|\gamma|>0$
and there is a $\gamma$-invariant convex subspace of $X$ isometric
to $\R$. (Each such subspace is called an axis for $\gamma$.)
\end{lemma}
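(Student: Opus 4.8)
This is a standard fact about complete {\rm{CAT}}$(0)$ spaces (see \cite{BH}, Chapter~II.6), and I would reconstruct its proof in two halves, with the forward implication carrying essentially all of the content.

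For the reverse implication I would argue directly. Suppose $|\gamma|>0$ and let $\ell\subset X$ be a $\gamma$-invariant subspace isometric to $\R$. The restriction $\gamma|_\ell$ is an isometry of the line $\ell\cong\R$, hence either a reflection or a translation. A reflection fixes a point $p\in\ell$, which would force $|\gamma|\le d(\gamma.p,p)=0$; so $\gamma|_\ell$ is a translation, by some amount $t>0$ (the translation length cannot vanish, or else $\gamma$ would again have a fixed point on $\ell$). Thus $d(\gamma.p,p)=t$ for every $p\in\ell$, giving $|\gamma|\le t$. Conversely, for an arbitrary $y\in X$ and a fixed $p\in\ell$ the triangle inequality yields $nt=d(\gamma^n.p,p)\le d(\gamma^n.y,y)+2d(p,y)\le n\,d(\gamma.y,y)+2d(p,y)$, so $d(\gamma.y,y)\ge t-2d(p,y)/n\to t$ as $n\to\infty$; hence $|\gamma|=t$, and the infimum is attained on $\ell$, so $\gamma$ is hyperbolic.

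For the forward implication, suppose $\gamma$ is hyperbolic, put $a=|\gamma|>0$, and fix $x$ with $d(\gamma.x,x)=a$. The plan is to show that the bi-infinite concatenation $c$ of the geodesic segments $[\gamma^n.x,\gamma^{n+1}.x]$, $n\in\Z$, is a geodesic line; since $\gamma$ carries $c$ onto itself, translating along it by $a$, the image of $c$ is then a $\gamma$-invariant convex subspace isometric to $\R$, i.e.\ an axis. The one geometric input I would need is the following: writing $m_n$ for the midpoint of $[\gamma^n.x,\gamma^{n+1}.x]$, one has $\gamma^{n+1}.x\in[m_n,m_{n+1}]$. To see this, observe that $d(m_n,m_{n+1})=d(m_n,\gamma.m_n)\ge a$ because $a$ is the infimal displacement, while $d(m_n,\gamma^{n+1}.x)=a/2$ and $d(\gamma^{n+1}.x,m_{n+1})=d(\gamma^{n+1}.x,\gamma.m_n)=d(\gamma^n.x,m_n)=a/2$; thus $d(m_n,m_{n+1})\le a$ with equality, and since geodesics in $X$ are unique (Definition~\ref{d:cat}) the point $\gamma^{n+1}.x$ must lie on $[m_n,m_{n+1}]$. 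Now consider the vertex $\gamma^n.x$ of $c$. Since $m_{n-1}\in[\gamma^{n-1}.x,\gamma^n.x]$ and $m_n\in[\gamma^n.x,\gamma^{n+1}.x]$, the geodesics from $\gamma^n.x$ to $m_{n-1}$ and to $m_n$ are initial segments of the geodesics from $\gamma^n.x$ to $\gamma^{n-1}.x$ and to $\gamma^{n+1}.x$; hence the Alexandrov angle at $\gamma^n.x$ between $[\gamma^n.x,\gamma^{n-1}.x]$ and $[\gamma^n.x,\gamma^{n+1}.x]$ equals the angle between $[\gamma^n.x,m_{n-1}]$ and $[\gamma^n.x,m_n]$, which is $\pi$ because $\gamma^n.x\in[m_{n-1},m_n]$. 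In a {\rm{CAT}}$(0)$ space the Euclidean comparison angle dominates the Alexandrov angle \cite{BH}, so an angle of $\pi$ forces the two geodesics to fit together into one; thus $[\gamma^{n-1}.x,\gamma^n.x]\cup[\gamma^n.x,\gamma^{n+1}.x]$ is a geodesic for every $n$. A straightforward induction, using this straightness at each successive vertex together with the uniqueness of geodesics, then shows that every finite sub-concatenation of $c$ is a geodesic, so $c$ is a geodesic line, as required.

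The substantive step is the forward implication, and within it the displayed fact $\gamma^{n+1}.x\in[m_n,m_{n+1}]$, which is really the observation that equality must hold in a triangle inequality squeezed between $|\gamma|$ and $\tfrac12|\gamma|+\tfrac12|\gamma|$. The only external inputs are that {\rm{CAT}}$(0)$ spaces are uniquely geodesic (already contained in Definition~\ref{d:cat}) and that Alexandrov angles are bounded above by comparison angles \cite{BH}; I would not even need convexity of the displacement function. The one point I would take care over is the induction upgrading ``straight at every vertex'' to ``globally geodesic,'' for which one uses that an initial segment of a geodesic determines its direction, so that straightness propagates along $c$.
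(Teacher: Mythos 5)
Your proof is correct. Note that the paper itself gives no proof of this lemma --- it is quoted as a standard fact, in effect Theorem II.6.8 of \cite{BH} --- and your reconstruction is essentially the standard argument found there: the reverse direction by restricting to the line and using the subadditivity of displacement along orbits, and the forward direction by showing that the broken geodesic through the orbit of a point realising $|\gamma|$ is straight, via the midpoint computation $d(m_n,m_{n+1})\le \tfrac12|\gamma|+\tfrac12|\gamma|=|\gamma|\le d(m_n,\gamma.m_n)$ and the fact that an Alexandrov angle of $\pi$ forces degeneracy of the comparison triangle. Your version even avoids invoking convexity of the displacement function, and the point you flag --- upgrading straightness at each vertex to a global geodesic --- is handled correctly by observing that angles depend only on initial segments (equivalently, that local geodesics in a {\rm{CAT}}$(0)$ space are geodesics).
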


\begin{proposition}\label{l:hyp} Let $\G$ be a group acting by
isometries on a complete {\rm{CAT}}$(0)$ space $X$. If $\gamma\in\G$
acts as a hyperbolic isometry then
$\gamma$ has infinite order in the abelianisation of
its centralizer $Z_\G(\gamma)$.
\end{proposition}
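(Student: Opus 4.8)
The plan is to use the structure theory of hyperbolic isometries of complete CAT$(0)$ spaces, in particular the splitting theorem for the min-set of a semisimple isometry. Let $\gamma$ act as a hyperbolic isometry, and set $\mathrm{Min}(\gamma) = \{x \in X : d(\gamma.x, x) = |\gamma|\}$. By the flat strip theorem (see \cite{BH} II.6.8), $\mathrm{Min}(\gamma)$ is a nonempty closed convex subspace that splits as a metric product $\mathrm{Min}(\gamma) \cong Y \times \R$, where the $\R$-factors are precisely the axes of $\gamma$ (Lemma \ref{l:axis}) and $\gamma$ acts on $\mathrm{Min}(\gamma)$ as $(\mathrm{id}_Y, \tau_{|\gamma|})$ with $\tau_{|\gamma|}$ translation by $|\gamma|$ along $\R$.

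The key point is that the centralizer $Z_\G(\gamma)$ preserves this structure. First, $Z_\G(\gamma)$ preserves $\mathrm{Min}(\gamma)$, since $\mathrm{Min}(\gamma)$ is defined purely in terms of $\gamma$ and any $g$ commuting with $\gamma$ carries the displacement function of $\gamma$ to itself. Next, any isometry of a product $Y \times \R$ that commutes with $(\mathrm{id}_Y, \tau_{|\gamma|})$ must respect the product decomposition — it cannot mix the factors — and so splits as $g = (g_Y, g_\R)$ with $g_\R$ an isometry of $\R$ commuting with a nontrivial translation; hence $g_\R$ is itself a translation (not a reflection). This gives a homomorphism $t \colon Z_\G(\gamma) \to \R$, $g \mapsto (\text{translation length of } g_\R)$, where we fix an orientation on the $\R$-factor; it is a genuine group homomorphism because translations of $\R$ form an abelian group under composition. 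By construction $t(\gamma) = |\gamma| \neq 0$, so the image of $\gamma$ under the composite $Z_\G(\gamma) \to Z_\G(\gamma)^{\mathrm{ab}} \to \R$ is nonzero, whence $\gamma$ has infinite order in $Z_\G(\gamma)^{\mathrm{ab}}$.

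The main obstacle is the verification that $\mathrm{Min}(\gamma)$ is nonempty and splits correctly, and that a commuting isometry cannot swap the two factors of $Y \times \R$. Nonemptiness is exactly the definition of semisimplicity together with $|\gamma| > 0$ making $\gamma$ hyperbolic (Lemma \ref{l:axis} supplies an axis, so $\mathrm{Min}(\gamma) \neq \emptyset$); the product splitting is the content of the flat strip theorem applied to the family of parallel axes. For the factor-swapping issue: if $g$ commuted with $\gamma$ but interchanged the $Y$- and $\R$-directions, then conjugating $\gamma = (\mathrm{id}_Y, \tau_{|\gamma|})$ by $g$ would produce an isometry acting trivially on the $\R$-factor, contradicting $g\gamma g^{-1} = \gamma$; more carefully, one notes $Y$ and $\R$ need not even be abstractly isometric, but in any case the de Rham-type uniqueness of the product decomposition of $\mathrm{Min}(\gamma)$ into (maximal Euclidean factor) $\times$ (remainder) — together with the fact that $\gamma$ translates along the Euclidean factor — pins down which sub-$\R$'s are axes, and $g$ must permute axes among themselves. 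This is routine given the cited splitting results, so no lengthy computation is needed.
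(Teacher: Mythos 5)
Your argument is correct and follows essentially the same route as the paper's proof (which simply cites \cite{BH}, p.~234, Remark 6.13): the min-set of $\gamma$ splits as $Y\times\R$, the centralizer preserves this splitting, and projection to the translations of the $\R$-factor gives a homomorphism to an abelian group sending $\gamma$ to the nonzero value $|\gamma|$. Your additional care over why a commuting isometry cannot swap factors is a reasonable elaboration of the same standard argument.
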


\begin{proof} This is proved on page 234 of \cite{BH} (remark 6.13).
The main points
are these:
 if $\gamma$ is hyperbolic then the union of
the axes of $\gamma$ splits isometrically
as $Y\times\R$; this subspace and its splitting are
preserved
by $Z_\G(\gamma)$; the action on the
second factor gives a homomorphism from $Z_\G(\g)$ to the
abelian group ${\rm{Isom}}_+(\R)$, in which the image of $\g$
is non-trivial.
\end{proof}

In the light of this proposition, in order to prove Theorem \ref{t:B} it
suffices to show that if $\S$ is a surface of finite type with
genus $g\ge 3$, then the Dehn twist $T$ about any simple closed
curve $c$ in $\S$ does not have infinite order in the abelianisation
of its centralizer.

\begin{proposition}If $\S$ is an orientable surface of finite type that has
genus at least $3$ (with any number of boundary components and
punctures) and if $T$ is the Dehn twist about any simple
closed curve $c$ in $\S$, then the abelianisation of the centralizer of
$T$ in ${\rm{Mod}}(\Sigma)$ is finite.
\end{proposition}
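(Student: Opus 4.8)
The plan is to describe the centralizer $Z_{\mod(\Si)}(T)$ explicitly in terms of the mapping class group of the surface obtained by cutting $\Si$ along $c$, and then to show that this description forces the abelianisation to be finite. Write $\Si_c$ for the (possibly disconnected) surface with boundary obtained by cutting $\Si$ along $c$; there are two boundary circles created, say $\partial_1,\partial_2$. A mapping class commutes with $T$ if and only if it fixes the isotopy class of $c$ and preserves its orientation (the orientation-reversing option would conjugate $T$ to $T^{-1}$), so $Z_{\mod(\Si)}(T)$ sits in a short exact sequence relating it to the subgroup of $\mod(\Si_c)$ that preserves $\{\partial_1,\partial_2\}$ as an \emph{ordered} pair, with kernel generated by $T$ itself (the twist about $c$ becomes the difference of the two boundary twists, which are central in $\mod(\Si_c)$). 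The first step, then, is to set up this sequence carefully, distinguishing the non-separating case (where $\Si_c$ is connected of genus $g-1$) from the separating case (where $\Si_c$ has two components whose genera sum to $g$), and in each case identifying the relevant finite-index subgroup of the mapping class group of $\Si_c$ that occurs as the quotient.

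The second step is to invoke the key structural fact about mapping class groups of surfaces of \emph{genus at least one}: their abelianisations are finite. Concretely, $\mod(\Si_{h,b})$ with $h\ge 3$ is perfect, $\mod(\Si_{2,b})$ has abelianisation a finite $2$-torsion group, and $\mod(\Si_{1,b})$ has abelianisation finite ($\mathbb Z/12$ in the once-punctured or closed torus case, and finite in general). Since $g\ge 3$, after cutting along $c$ every component of $\Si_c$ still has genus $\ge 1$: in the non-separating case the single component has genus $g-1\ge 2$; in the separating case the two genera are positive and sum to $g\ge 3$, so each is $\ge 1$ and at least one is $\ge 2$. Hence the quotient of $Z_{\mod(\Si)}(T)$ appearing in the short exact sequence — a finite-index subgroup of a product of mapping class groups of positive-genus surfaces with boundary — has finite abelianisation (a finite-index subgroup of a group with finite abelianisation need not itself have finite abelianisation in general, so here I would instead use that each factor mapping class group, and the relevant boundary-permutation subgroup, is generated by Dehn twists that are all conjugate in suitable stabiliser subgroups, forcing the abelianisation of the subgroup we need to be finite directly).

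The third step is the homological bookkeeping: from the five-term exact sequence in homology associated to $1\to \langle T\rangle \to Z_{\mod(\Si)}(T)\to Q\to 1$, we get $H_1(Q)$ mapping onto a subgroup of finite index in $H_1(Z_{\mod(\Si)}(T))$ modulo the image of $\langle T\rangle\cong\mathbb Z$. So it remains only to show that the image of $T$ in $H_1(Z_{\mod(\Si)}(T))$ is torsion, i.e.\ that some nonzero power of $T$ is a product of commutators in $Z_{\mod(\Si)}(T)$. This is where one uses genus $\ge 3$ decisively: inside $\Si_c$ there is enough room (a genus-$1$ subsurface abutting $\partial_1$ and one abutting $\partial_2$, or a single genus-$2$ subsurface abutting both in the non-separating case) to write the boundary twist $T_{\partial_1}$ — and likewise $T_{\partial_2}$ — as a product of commutators via a lantern-type or chain relation, all of whose factors lie in $Z_{\mod(\Si)}(T)$. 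Then $T = T_{\partial_1}T_{\partial_2}^{-1}$ (with appropriate conventions) is a product of commutators, so it is trivial in the abelianisation, and we conclude that $H_1(Z_{\mod(\Si)}(T))$ is finite.

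The main obstacle is the last step: producing the explicit factorisation of a power of $T$ into commutators within the centralizer, controlling supports so that nothing strays outside $Z_{\mod(\Si)}(T)$, and handling the separating and non-separating cases (and the interaction with punctures and extra boundary) uniformly. The lantern relation is the natural tool — it already underlies the proof that higher-genus mapping class groups are perfect — and the genus-$3$ hypothesis is exactly what guarantees that after cutting along $c$ one still has a genus-$\ge 2$ piece in which to deploy it; the bookkeeping for boundary twists is routine once that room is available.
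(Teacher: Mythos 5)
Your overall route is the one the paper takes, but the paper's version is far shorter: it identifies the centralizer of $T$ with the stabilizer of $c$, observes that this stabilizer is (up to a finite-index ambiguity coming from swapping the two sides of $c$) a homomorphic image of the mapping class group of the surface obtained by cutting $\Si$ along $c$, notes that since $g\ge 3$ at least one component of the cut surface has genus at least $2$, and then quotes the finiteness of the abelianisations of such mapping class groups from Korkmaz's survey. Your proposal uses the same cutting sequence and the same homological input, so the strategy is sound; the difficulties lie in the extra scaffolding (the five-term sequence and the explicit commutator factorisations), and in three supporting claims that are wrong as stated.

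First, a mapping class reversing the orientation of $c$ conjugates $T$ to $T$, not to $T^{-1}$: the sense of a Dehn twist is determined by the orientation of the surface, not of the curve, so the centralizer is the full stabilizer of $c$. (This slip is harmless, since finite abelianisation passes from a finite-index subgroup to the ambient group via the transfer.) Second, and more seriously, the abelianisation of the mapping class group of a genus-$1$ surface with boundary is \emph{not} finite: $\mod(\Si_{1,1})$ with boundary fixed pointwise is the braid group $B_3$, whose abelianisation is $\Z$; and your fallback --- that a group generated by mutually conjugate Dehn twists has finite abelianisation --- is also false, since conjugate generators only force the abelianisation to be cyclic ($B_3$ again). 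So your step 2 fails precisely in the separating case where one side of $c$ has genus $\le 1$, which is the case your step 3 is meant to rescue. Third, in that step the identity $T=T_{\partial_1}T_{\partial_2}^{-1}$ is backwards: under regluing \emph{both} boundary twists $T_{\partial_1}$ and $T_{\partial_2}$ map to $T$, and it is $T_{\partial_1}T_{\partial_2}^{-1}$ that generates the kernel of the cutting homomorphism. The efficient repair is to work entirely on the side $A$ of $c$ having genus $h\ge 2$: there $H_1(\mod(A))$ is finite and the chain relation exhibits the boundary twist $T_{\partial_1}$, hence $T$ itself, as a product of commutators of elements supported in $A$ (so lying in the centralizer). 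With that in hand your five-term-sequence bookkeeping, applied to the quotient by the image of $\mod(A)$ rather than by $\langle T\rangle$, does close the argument, but as written the proposal rests on the two false homological claims above.
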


\begin{proof}
The centralizer of $T$ in ${\rm{Mod}}(\Sigma)$
 consists of mapping classes of homeomorphisms that leave
$c$ invariant. This is a homomorphic image of the mapping class group
of the surface obtained by cutting $\S$ along $c$.
(This surface has two boundary components corresponding to $c$
and hence two Dehn twists mapping to $T$.) Since $\S$ has genus
at least $3$, at least one component of the cut-open surface has genus $g\ge 2$. The
mapping class group of such a surface  has finite
abelianisation ---  see
\cite{korkmaz} for a concise survey and references.
\end{proof}

\begin{remark}\label{r:abelian}
As we remarked in the introduction, Theorem \ref{t:B} gives restrictions
on how subgroups of finite index in ${\rm{Mod}}(\Sigma)$ can act on
{\rm{CAT}}$(0)$ spaces.
For example, given an orientable surface of genus $g\ge 3$
and a homomorphism $\phi$ from a subgroup $H<
{\rm{Mod}}(\Sigma)$ of  index $n$
to a group $G$ that acts by hyperbolic isometries on
a complete {\rm{CAT}}$(0)$ space $X$, we can apply Theorem \ref{t:B} to the
induced action\footnote{One can regard
this as "multiplicative induction" in the sense of \cite{tD}
p.~35: if $H<G$
has index $n$ and acts on $X$ then one identifies $X^n$
with the space of $H$-equivariant maps $f:G\to X$ and considers the (right)
action $(g.f)(\gamma) := f(\gamma g)$; a power of each $g\in G$ preserves
the factors of $X^n$, so it follows from \cite{BH} p.~231--232 that the action
of $G$ is by semisimple isometries if the action of $H$ is.}
 of ${\rm{Mod}}(\Sigma)$ on $X^n$ and
hence deduce that any power of a Dehn
twist that lies in $H$ must lie in the kernel of $\phi$. Taking $G=\Z$
tells us that powers of Dehn twists cannot have infinite image in the
abelianisation of $H$. A more explicit proof of this last fact was given recently
by Andew Putman \cite{andy}.
\end{remark}

I am grateful to Pierre-Emmanuel Caprace, Dawid Kielak, Anders Karlsson and Nicolas Monod
for  their comments
concerning the following extension of Theorem \ref{t:B}.

\begin{theorem}\label{t:paras}
Let $\S$ be an orientable surface of finite type with genus
$g\ge 3$. Whenever ${\rm{Mod}}(\Sigma)$ acts by  isometries on a complete
{\rm{CAT}}$(0)$ space, each Dehn twist $T\in {\rm{Mod}}(\Sigma)$ acts either as an elliptic isometry
or as a neutral parabolic
(i.e.~$|T|=0$).
\end{theorem}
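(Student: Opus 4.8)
The plan is to reduce the general isometric action to the semisimple case already handled by Theorem~\ref{t:B}, by ruling out the possibility that a Dehn twist $T$ acts as a \emph{non-neutral} parabolic, i.e.\ with $|T|>0$ but no invariant axis. The key structural input is the same as in Proposition~\ref{l:hyp}: one must exploit the size of the centralizer $Z_{{\rm{Mod}}(\Sigma)}(T)$, which (as recorded in the proof of the proposition following Proposition~\ref{l:hyp}) surjects onto a group with finite abelianisation whenever $g\ge 3$. So I would isolate a purely {\rm{CAT}}$(0)$ statement of the form: \emph{if $\gamma$ is an isometry of a complete {\rm{CAT}}$(0)$ space with $|\gamma|>0$, then $\gamma$ has infinite order in the abelianisation of $Z_\G(\gamma)$} --- valid for arbitrary, not necessarily semisimple, $\gamma$. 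Granting such a statement, Theorem~\ref{t:paras} follows exactly as Theorem~\ref{t:B} did: $T$ has finite order in the abelianisation of its centralizer, so $|T|=0$, leaving only the elliptic and neutral-parabolic cases.

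The main work, then, is to upgrade Proposition~\ref{l:hyp} to the parabolic setting. For a semisimple $\gamma$ one uses the Min set and its splitting $\mathrm{Min}(\gamma)\cong Y\times\R$; when $\gamma$ is parabolic with $|\gamma|>0$ this set is empty, so one instead works with the filtration by sublevel sets of the displacement function $y\mapsto d(\gamma.y,y)$, or passes to an asymptotic cone / the horofunction boundary. The cleanest route I would try: consider the function $f(y)=d(\gamma.y,y)^2$, which is convex; its infimum is $|\gamma|^2>0$ and is not attained. Work in the filtered union of near-minimal sets, take a non-principal ultralimit $X_\omega$ (which is again a complete {\rm{CAT}}$(0)$ space), and observe that in $X_\omega$ the isometry induced by $\gamma$ does attain its translation length along some point, hence is semisimple with $|\gamma|_\omega = |\gamma|>0$. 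The centralizer $Z_\G(\gamma)$ acts on $X_\omega$ commuting with this limiting isometry, and one wants to know that the induced homomorphism $Z_\G(\gamma)\to{\rm{Isom}}_+(\R)$ (coming from the splitting of the Min set in $X_\omega$) still sends $\gamma$ to a nontrivial translation. This last point is the crux, and the reason to be careful: a priori the axis in the cone could be ``new'' and $\gamma$ could act on its $\R$-factor with zero translation even though $|\gamma|_\omega>0$. To prevent this I would use that $\gamma$ commutes with itself and translates the displacement-minimising horoball structure; concretely, the Busemann function attached to the end of a near-geodesic ray along which $f\to|\gamma|^2$ is $\gamma$-quasi-invariant with a genuine linear drift equal to $|\gamma|$, and Busemann functions pass to the cone, giving a $Z_\G(\gamma)$-equivariant homomorphism to $\R$ on which $\gamma$ has image $|\gamma|\ne 0$.

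An alternative, and perhaps more self-contained, approach avoids ultralimits: one can invoke the theory of the \emph{canonical parabolic subgroup} / Caprace--Monod structure theory, or simply the elementary fact (due to Karlsson--Margulis type arguments) that a parabolic isometry $\gamma$ with $|\gamma|>0$ determines a $Z_\G(\gamma)$-invariant point $\xi$ in the boundary at infinity $\partial X$ together with a Busemann function $b_\xi$ such that $b_\xi(\gamma.y)-b_\xi(y) = -|\gamma|$ for all $y$ --- this is precisely the ``drift'' statement and is where the hypothesis $|\gamma|>0$ (as opposed to neutral) is used. The assignment $g\mapsto \big(y\mapsto b_\xi(g.y)-b_\xi(y)\big)$ is then a well-defined homomorphism $Z_\G(\gamma)\to\R$ taking $\gamma$ to $-|\gamma|\ne0$, which is all that Proposition~\ref{l:hyp}'s proof needs. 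I expect the identification of this Busemann character, and the verification that it really is a homomorphism on the full centralizer (rather than just a quasimorphism), to be the main obstacle; the surface-theoretic half of the argument is unchanged from Section~\ref{s:B} and contributes nothing new.
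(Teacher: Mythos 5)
Your second, ``alternative'' route is precisely the proof in the paper: by the Karlsson--Margulis result, a parabolic $\gamma$ with $|\gamma|>0$ has a canonical fixed point $\xi\in\partial X$ (characterised by the linear drift $\frac1n d(\gamma^n.x,c(n|\gamma|))\to0$), so $Z_\G(\gamma)$ fixes $\xi$ and acts on Busemann functions centred at $\xi$ by a translation character $\phi$ with $\phi(\gamma)=-|\gamma|\neq 0$, whence $\gamma$ has infinite order in the abelianisation of its centralizer and the argument of Theorem~B goes through verbatim. The obstacle you worry about (homomorphism versus quasimorphism) does not arise, since any two Busemann functions centred at the same point of $\partial X$ in a complete {\rm{CAT}}$(0)$ space differ by a constant; your more elaborate ultralimit route is therefore unnecessary.
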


\begin{proof} The proof of Theorem \ref{t:B} will apply provided we can extend
Proposition \ref{l:hyp} to cover non-neutral parabolics. In order to
appreciate this extension, the reader should be familiar with the
basic theory of Busemann functions in {\rm{CAT}}$(0)$ spaces, \cite{BH}  Chap.~II.8.

If $\g$ is a parabolic
isometry with $|\g|>0$, then a special case of a result
of Karlsson and Margulis \cite{KM} (cf.~\cite{Karl}, p.~285) shows
that $\g$ has a unique fixed point at infinity  $\xi\in\partial X$ 
with the property that  $\frac 1 n d(\g^n.x, c(n|\g|))
\to 0$ as $n\to \infty$ for every $x\in X$ and every geodesic ray $c:[0,\infty)\to X$
with $c(\infty)=\xi$. Now, $Z_\G(\g)$ fixes $\xi$ and 
acts  on any Busemann
function centred at $\xi$ by the formula $z.\beta(t) = \beta(t)+\phi(z)$,
where $\phi : Z_\G(\g)\to\R$ is a homomorphism.  Since
$\phi(\g)=-|\g|$, this is only possible if $\gamma$ has
infinite order in the abelianisation of $Z_\G(\gamma)$.
\end{proof}

I.~Kapovich and B.~Leeb \cite{KL} were the first to prove that if $g\ge 3$ then
 ${\rm{Mod}}(\Sigma_g)$ cannot act properly by semisimple isometries on a complete
{\rm{CAT}}$(0)$ space; cf.~\cite{BH} p.~257.

\section{Augmented Teichm\"uller space and Theorem A}\label{s:WP}

Let $\Sigma$ be an orientable hyperbolic surface of finite
type with empty boundary and let $\-\T$ denote the completion of
its Teichm\"uller space  equipped with the Weil-Petersson metric.
$\-\T$ is equivariantly homeomorphic to the augmented Teichm\"uller space
defined by Abikoff \cite{abi}. Wolpert's concise survey \cite{wolp} provides
a clear introduction and ample references to the facts that we need
here.

Masur \cite{masur2} describes the
metric structure of   $\-\T\ssm \T$ as follows. It is a union of
strata $\T_C$ corresponding to the homotopy classes of
systems $C$ of disjoint, non-parallel, simple
closed curves on $\S$. The stratum corresponding to $C$ is the
Teichm\"uller space
of the nodal surface obtained by shrinking each loop $c\in C$
to a pair of cusps (punctures); more explicitly, it is the product of
Teichm\"uller spaces for the components of $\S\ssm \bigcup C$
(with punctures in place of the pinched curves),
each equipped with its Weil-Petersson metric.  The identification
of $\T_C$ with this product of Teichm\"uller spaces is equivariant
with respect to the natural map from the subgroup of ${\rm{Mod}}(\S)$
that preserves $C$ and its components
to the mapping class group of the nodal surface.
Importantly,
$\T_C\subset\-\T$ is a convex subspace \cite{DW}.

The Dehn twists (and hence multi-twists $\mu$)
in the curves of $C$ act trivially on $\T_C$ and hence are elliptic
isometries of $\-\T$.

Daskalopoulos and Wentworth \cite{DW} proved that every
pseudo-Anosov element $\psi$ of ${\rm{Mod}}(\S)$ acts as a hyperbolic
isometry of $\-\T$; indeed
 each has an axis contained in $\T$. If $\rho\in{\rm{Mod}}(\Sigma)$ leaves invariant
a curve system $C$ and each of the components of $\S\ssm C$, then
either it is a multi-twist (and hence acts as an elliptic isometry of
$\-\T$) or else it
acts as a pseudo-Anosov on one of the  components of $\S\ssm C$. In
the latter case $\rho$ will act as a hyperbolic isometry of $\T_C$,
and hence of $\-\T$, since
$\T_C\subset\-\T$ is convex. An isometry of a complete {\rm{CAT}}$(0)$ space is hyperbolic
(resp.~elliptic) if and only if every proper power of it is hyperbolic (resp.~elliptic)
\cite{BH}, p.~231--232.
Every element of the mapping class group has a proper power that
is one of the three types $\mu, \psi, \rho$ considered above \cite{wpt}.
Thus Theorem A is proved.

\begin{remarks}\label{rems}(1) I want to emphasize once again that I stated Theorem A only
to provide context: nothing in the proof is original. Moreover, since
I first wrote this note, Ursula Hamenstadt has given essentially the same proof in \cite{ursula}.

(2)
Masur-Wolf \cite{MW} and Brock-Margalit \cite{BM} have shown that
${\rm{Mod}}(\S)$ is the full isometry group of $\-\T$. I am grateful
to Jeff Brock for a helpful correspondence on this point.

(3) In the  proof of Theorem A, the roots of multitwists emerged as the only elliptic isometries
(provided that we regard the identity as a multitwist).
Combining this observation with Theorem \ref{t:B}, we see that any homomorphism from the mapping
class group of a surface of genus $g\ge 3$ to another mapping class group must send roots of
multitwists to roots of multitwists.
 This contrasts with the fact that there are injective homomorphisms
between mapping class groups of once-punctured surfaces of higher genus
that send pseudo-Anosov elements to multitwists \cite{ALS}.
\end{remarks}

\section{Criteria for common fixed points}

The classical theorem of Helly concerns the combinatorics
of families of
convex subsets in $\R^n$. There are many
variations on this theorem in the literature. For
our purposes the following will be sufficient (see
\cite{mrb:helly}, \cite{farb-helly} and references therein).

\begin{proposition}\label{t:helly} Let $X$ be a complete
\cat space of (topological covering) dimension at most
$n$, and let $C_1,\dots,C_N\subset X$ be closed
convex subsets. If every $(n+1)$ of the $C_i$
have a point of intersection, then $\bigcap_{i=1}^NC_i\neq\emptyset$.
\end{proposition}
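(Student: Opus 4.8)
The plan is to prove the Helly-type statement (Proposition \ref{t:helly}) by induction on $N$, the number of convex sets, with the base case $N \le n+1$ being trivial (it is exactly the hypothesis). For the inductive step, suppose the result holds for any $N-1$ closed convex subsets of a complete \cat space of dimension at most $n$, and let $C_1, \dots, C_N$ be given with $N \ge n+2$. The standard reduction is to replace one of the sets: set $D_i := C_i \cap C_N$ for $i = 1, \dots, N-1$. Each $D_i$ is closed and convex (intersection of closed convex sets), and they all live in the complete \cat space $C_N$, which has dimension at most $n$. If I can verify that every $n+1$ of the $D_i$ intersect, then the inductive hypothesis applied inside $C_N$ gives $\bigcap_{i=1}^{N-1} D_i \neq \emptyset$, which is exactly $\bigcap_{i=1}^{N} C_i \neq \emptyset$, completing the induction.

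So the crux is the following claim: given that every $n+1$ of the original $C_1, \dots, C_N$ intersect, it follows that every $n+1$ of the $D_i = C_i \cap C_N$ intersect — i.e., for any indices $i_1 < \dots < i_{n+1} \le N-1$, the set $C_{i_1} \cap \dots \cap C_{i_{n+1}} \cap C_N \neq \emptyset$. This is where the genuine content lies, and it is itself a Helly-type assertion about the $n+2$ sets $C_{i_1}, \dots, C_{i_{n+1}}, C_N$. The natural way to get it is a \emph{second} induction (or a nested appeal to the same proposition at one lower level of the multi-index), reducing "every $n+1$ of these $n+2$ sets meet $\Rightarrow$ all $n+2$ meet". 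Concretely, one intersects everything with $C_N$ as above but now there are only $n+1$ sets $C_{i_j} \cap C_N$ inside $C_N$, and one needs every $n+1$ of the $C_{i_j}$ themselves together with... — in fact the cleanest formulation is to prove, by induction on $k \ge n+1$, the statement $H(k)$: "any $k$ closed convex subsets of a complete \cat space of dimension $\le n$, every $n+1$ of which intersect, have a common point", and the inductive step $H(k-1) \Rightarrow H(k)$ is precisely the reduction above, where verifying the hypothesis of $H(k-1)$ for the sets $D_i$ uses $H(k-1)$ again applied to sub-families of size $n+1$ inside $C_N$. The induction is well-founded because each application either decreases the number of sets or decreases the ambient situation to a proper convex subspace while keeping the dimension bound.

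The one non-combinatorial ingredient — and the step I expect to be the main obstacle to write carefully — is that a closed convex subset $Y$ of a complete \cat space $X$ is itself a complete \cat space with $\dim Y \le \dim X$. Completeness of $Y$ is immediate since $Y$ is closed in the complete space $X$; the \cat inequality is inherited because geodesics in $X$ between points of $Y$ stay in $Y$ by convexity, so comparison triangles are literally the same. The dimension inequality $\dim Y \le \dim X$ holds because $Y$ is a subspace and topological covering dimension is monotone under passing to subspaces (for the separable metric, or more generally metric, spaces in play here). With this lemma in hand the induction goes through mechanically. I would also remark that the hypothesis "$N \ge n+2$" is what makes the reduction non-vacuous, and that the completeness hypothesis is used only through the inheritance lemma and any unique-geodesic facts already packaged into Definition \ref{d:cat}; no geodesic-extendability or properness is needed.
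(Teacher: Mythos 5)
Your induction never gets off the ground: the entire content of the proposition is concentrated in the case $N=n+2$, and your argument is circular exactly there. In the inductive step you must verify that every $n+1$ of the sets $D_i=C_i\cap C_N$ meet, i.e.\ that $C_{i_1}\cap\dots\cap C_{i_{n+1}}\cap C_N\neq\emptyset$ for each choice of indices. That is a statement about $n+2$ of the original sets, every $n+1$ of which meet by hypothesis --- which is precisely the first nontrivial instance of the proposition, not an instance of $H(k-1)$ for smaller $k$. Your attempted repair (``apply $H(k-1)$ again to sub-families of size $n+1$ inside $C_N$'') asks $H(\cdot)$ to conclude that $n+1$ sets $D_{i_j}$ have a common point from the hypothesis that every $n+1$ of them do, i.e.\ from the conclusion itself; and passing to the ambient space $C_N$ does not lower the dimension bound, so the claimed well-foundedness fails. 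A concrete test case: for $n=1$ ($X$ an $\R$-tree) and $N=3$, your scheme reduces ``three pairwise-intersecting closed subtrees have a common point'' to ``$(C_1\cap C_3)\cap(C_2\cap C_3)\neq\emptyset$'', which is the same statement. The reduction from general $N$ to $N=n+2$ that you describe is standard and fine; what is missing is any argument for $N=n+2$, and that step requires genuine topological input tied to the dimension hypothesis (in the classical Euclidean case it is Radon's theorem; in the {\rm{CAT}}$(0)$ setting one typically builds a singular $n$-sphere from the nerve $\partial\Delta^{n+1}$ of the family, fills it in the contractible space $X$, and derives a contradiction with $\dim X\le n$).

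For what it is worth, the paper does not prove this proposition either: it is quoted from the references [Bri1] and [Farb], so there is no in-paper argument to compare against. But as written your proposal does not constitute a proof.
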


When applied to the fixed point sets $C_i=\fix(s_i)$ with $s_i\in S$, this
implies:

\begin{corollary}\label{c:basic} Let $\G$ be a group acting by isometries on a complete
{\rm{CAT}}$(0)$ space $X$ of dimension at most $n$ and suppose that $\G$
is generated by the finite set $S$. If every $(n+1)$-element subset of
$S$ fixes a point of $X$, then $\G$ has a fixed point in $X$.
\end{corollary}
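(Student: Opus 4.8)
The plan is to deduce this directly from Proposition \ref{t:helly} by taking the closed convex sets to be the fixed-point sets of the generators. First I would record the standard fact that for any isometry $g$ of a complete \cat space $X$, the fixed set $\fix(g)=\{x\in X : g.x=x\}$ is closed and convex. It is closed because the displacement function $x\mapsto d(g.x,x)$ is continuous, so $\fix(g)$ is the preimage of $0$. It is convex because if $x,y\in\fix(g)$ then $g$ maps the geodesic $[x,y]$ isometrically to the geodesic $[g.x,g.y]=[x,y]$; since geodesics between two points are unique in a \cat space and $g$ fixes the endpoints, $g$ must fix $[x,y]$ pointwise. This is the only place the \cat hypothesis enters at the level of the corollary.

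Now write $S=\{s_1,\dots,s_N\}$ and set $C_i=\fix(s_i)$. If $N\le n+1$ the hypothesis already asserts that $S$ fixes a point, so assume $N>n+1$. The hypothesis that every $(n+1)$-element subset of $S$ fixes a point of $X$ says precisely that every $(n+1)$ of the sets $C_1,\dots,C_N$ have a common point of intersection. Since $X$ has (topological covering) dimension at most $n$ and each $C_i$ is closed and convex, Proposition \ref{t:helly} applies and gives $\bigcap_{i=1}^N C_i\neq\emptyset$.

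Finally, a point $x_0\in\bigcap_{i=1}^N\fix(s_i)$ is fixed by every generator $s_i$, hence by every word in the $s_i^{\pm1}$, i.e.\ by the whole of $\G=\langle S\rangle$; so $x_0$ is a global fixed point. I do not expect a genuine obstacle here: the substance of the statement is carried entirely by the Helly-type Proposition \ref{t:helly}, and the corollary is a routine application once the elementary closedness and convexity of $\fix(s_i)$ is in hand. The only mild care needed is the trivial reduction to the case $N>n+1$ and the observation that fixing each generator is equivalent to membership in the intersection $\bigcap_i C_i$.
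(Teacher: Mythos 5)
Your proposal is correct and is exactly the paper's argument: the author derives the corollary by applying Proposition \ref{t:helly} to the sets $C_i=\fix(s_i)$, with the closedness and convexity of fixed-point sets (which you verify explicitly) taken as standard. Nothing is missing.
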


We shall need a refinement of this result that relies on the
following well-known proposition, \cite{BH} p.179.
We write $\isom(X)$ for the group of isometries of a metric space $X$
and $\ball_r(x)$ for the closed ball of radius $r>0$ about $x\in X$.
Given a subspace $Y\subseteq X$, let $r_Y:=\inf\{r\mid 
Y\subseteq \ball_r(x), \text{ some } x\in X\}$.

\begin{proposition}
If $X$ is a complete \cat space and $Y$ is a  bounded
subset, then there is a unique point $c_Y\in X$ such that
$Y\subseteq \ball_r(c_Y)$.
\end{proposition}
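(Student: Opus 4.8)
The plan is to establish existence and uniqueness separately, exploiting convexity of the distance function in a \cat space throughout. First I would treat \emph{existence}. Set $\rho:=r_Y=\inf\{r\mid Y\subseteq\ball_r(x)\text{ for some }x\in X\}$, which is finite since $Y$ is bounded. Choose a sequence $x_n\in X$ together with radii $r_n\downarrow\rho$ such that $Y\subseteq\ball_{r_n}(x_n)$. The key step is to show $(x_n)$ is Cauchy; since $X$ is complete, its limit $c_Y$ will then satisfy $Y\subseteq\ball_\rho(c_Y)$ (because $d(c_Y,y)=\lim_n d(x_n,y)\le\lim_n r_n=\rho$ for each $y\in Y$), and clearly no smaller radius works, so $\rho$ is realised.

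The Cauchy estimate is where the \cat hypothesis does its work, and this is the main technical point. Given $m,n$, let $w$ be the midpoint of the geodesic $[x_m,x_n]$. For any $y\in Y$, the \cat comparison inequality applied to the triangle with vertices $x_m,x_n,y$ (comparing $w$ with the midpoint of the corresponding Euclidean segment) gives
\[
d(w,y)^2\le\tfrac12 d(x_m,y)^2+\tfrac12 d(x_n,y)^2-\tfrac14 d(x_m,x_n)^2
\le\max(r_m,r_n)^2-\tfrac14 d(x_m,x_n)^2.
\]
Hence $Y\subseteq\ball_R(w)$ with $R^2\le\max(r_m,r_n)^2-\tfrac14 d(x_m,x_n)^2$, and by definition of $\rho$ we get $\rho^2\le\max(r_m,r_n)^2-\tfrac14 d(x_m,x_n)^2$, i.e. $d(x_m,x_n)^2\le 4(\max(r_m,r_n)^2-\rho^2)\to 0$ as $m,n\to\infty$. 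So $(x_n)$ is Cauchy, completing existence.

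For \emph{uniqueness}, suppose $c$ and $c'$ both satisfy $Y\subseteq\ball_\rho(\cdot)$, and let $w$ be the midpoint of $[c,c']$. Running the same midpoint comparison as above, for every $y\in Y$,
\[
d(w,y)^2\le\tfrac12 d(c,y)^2+\tfrac12 d(c',y)^2-\tfrac14 d(c,c')^2\le\rho^2-\tfrac14 d(c,c')^2.
\]
Thus $Y\subseteq\ball_{R'}(w)$ with $R'<\rho$ unless $c=c'$; but $R'<\rho$ contradicts the minimality of $\rho=r_Y$. Therefore $c=c'$, and the circumcentre $c_Y$ is unique. The only genuine obstacle is remembering to phrase the midpoint inequality correctly — it is the $\mathrm{CN}$ inequality of Bruhat–Tits, which is exactly the quadratic form of the \cat$(0)$ condition recorded in Definition \ref{d:cat} — after which both halves of the argument are short; no further input about $\isom(X)$ or about $Y$ beyond boundedness is needed.
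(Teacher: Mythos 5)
Your proof is correct, and it is essentially the argument the paper points to: the paper gives no proof of its own but cites \cite{BH} p.~179, where existence and uniqueness of the circumcentre are established by exactly this Bruhat--Tits CN-inequality midpoint argument. Both the Cauchy estimate for an approximating sequence of centres and the uniqueness step match that standard treatment.
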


\begin{corollary} Let $X$ be a complete \cat space. If $H<\isom(X)$ has
a bounded orbit then $H$ has a fixed point.
\end{corollary}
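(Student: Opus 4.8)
The plan is to use the circumcentre (centre of the smallest enclosing ball) of a suitable bounded $H$-orbit, and to exploit its uniqueness. By hypothesis there is a point $x_0\in X$ whose $H$-orbit $Y:=H.x_0$ is bounded. Since $H$ acts by isometries and $Y$ is defined as an orbit, $Y$ is $H$-invariant: $h.Y=Y$ for every $h\in H$.

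Next I would apply the preceding Proposition to the bounded set $Y$: it furnishes a point $c_Y\in X$, with $r=r_Y$, such that $Y\subseteq\ball_{r_Y}(c_Y)$, and this point is the \emph{unique} such point. The key step is to observe that for any $h\in H$ the point $h.c_Y$ has the same defining property. Indeed, because $h$ is an isometry it maps $\ball_{r_Y}(c_Y)$ onto $\ball_{r_Y}(h.c_Y)$, so from $Y\subseteq\ball_{r_Y}(c_Y)$ we get $h.Y\subseteq\ball_{r_Y}(h.c_Y)$; and $h.Y=Y$, so $Y\subseteq\ball_{r_Y}(h.c_Y)$. (One also notes $r_{h.Y}=r_Y$ since isometries preserve which radii work, so $h.c_Y$ really is a centre of a smallest enclosing ball for $Y$.) By the uniqueness clause of the Proposition, $h.c_Y=c_Y$ for all $h\in H$, i.e.~$c_Y$ is a fixed point of $H$.

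There is essentially no serious obstacle here; the only thing to be careful about is the logical order, namely that uniqueness in the Proposition is stated for the fixed radius $r_Y$ (the circumradius), so one must first check that $h.c_Y$ is a legitimate circumcentre for the \emph{same} radius before invoking uniqueness. Once that bookkeeping is in place the conclusion is immediate. (This is exactly the argument of \cite{BH} p.~179; I would simply record it for completeness, since it is the engine behind Proposition \ref{t:helly} and Corollary \ref{c:basic}.)
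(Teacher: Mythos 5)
Your argument is correct and is exactly the paper's proof: the paper simply notes that the circumcentre $c_O$ of any $H$-orbit $O$ is a fixed point, which is the uniqueness-of-the-circumcentre argument you spell out. Your extra care in checking that $h.c_Y$ is a circumcentre for the same radius $r_Y$ before invoking uniqueness is the right bookkeeping, but it is the same route.
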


\begin{proof} The centre $c_O$ 
of any $H$-orbit $O$ will be a fixed
point.
\end{proof}

\begin{corollary}\label{c:commutes}
 Let $X$ be a complete  {\rm{CAT}}$(0)$ space. If
the groups $H_1,\dots, H_\ell<\isom(X)$ commute and
$\fix(H_i)$ is non-empty for $i=1,\dots,\ell$, then
$\bigcap_{i=1}^\ell \fix(H_i)$ is non-empty.
\end{corollary}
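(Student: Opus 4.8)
The plan is to induct on $\ell$, using at each stage the corollary that a group of isometries of a complete \cat space with a bounded orbit has a fixed point. The base case $\ell=1$ is precisely the hypothesis. For the inductive step I would first record the two standard facts that make everything work: (i) the fixed-point set $\fix(H)$ of any group $H$ of isometries of a \cat space is closed and convex --- for a single isometry this follows from uniqueness of geodesics (an isometry fixing two points fixes the geodesic between them pointwise), and for a group one intersects; and (ii) a closed convex subset of a complete \cat space, with the induced metric, is again a complete \cat space.

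Granting this, set $Z := \bigcap_{i=2}^{\ell}\fix(H_i)$. The groups $H_2,\dots,H_\ell$ commute and each has non-empty fixed set, so by the inductive hypothesis $Z\neq\emptyset$; by (i) it is closed and convex, hence by (ii) a complete \cat space in its own right. The key observation is that $H_1$ preserves $Z$: since $H_1$ commutes with each $H_i$, if $x\in\fix(H_i)$ and $h\in H_1$ then $h_i(h.x)=h(h_i.x)=h.x$ for every $h_i\in H_i$, so $h.x\in\fix(H_i)$; intersecting over $i$ gives $h.Z=Z$. Thus $H_1$ acts by isometries on the complete \cat space $Z$.

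It remains to produce a point of $Z$ fixed by $H_1$, and this is where the bounded-orbit corollary enters. Choose any $p\in\fix(H_1)$ (non-empty by hypothesis, though $p$ need not lie in $Z$) and any $z\in Z$. For every $h\in H_1$ we have $d(h.z,\,p)=d(h.z,\,h.p)=d(z,p)$, so the orbit $H_1.z$ lies in a sphere about $p$ and is therefore bounded; being contained in $Z$, it is a bounded orbit for the action of $H_1$ on $Z$. Applying the corollary on bounded orbits to this action yields a point $q\in Z$ fixed by $H_1$, and then $q\in\fix(H_1)\cap Z=\bigcap_{i=1}^{\ell}\fix(H_i)$, completing the induction. (Alternatively, one could replace the last step by projecting $p$ to $Z$ via the nearest-point retraction, which is $H_1$-equivariant because $H_1$ stabilises $Z$.)

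I do not expect a serious obstacle: the argument is a routine combination of the convexity and closedness of fixed-point sets, the stability of the complete \cat condition under passage to closed convex subsets, and the bounded-orbit corollary. The only point needing a moment's care is the interplay between fixed points and the commuting hypothesis --- verifying that $H_1$ genuinely stabilises $Z$ and that a fixed point of $H_1$ in the ambient space suffices to bound an $H_1$-orbit inside $Z$ --- but this is exactly the short computation above.
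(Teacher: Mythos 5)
Your argument is correct and is essentially the paper's proof: both proceed by induction, observe that the commuting hypothesis makes the relevant fixed-point set invariant, note that non-emptiness of the other group's fixed set bounds an orbit, and take the circumcentre inside a closed convex (hence complete {\rm{CAT}}$(0)$) subspace. The only cosmetic difference is that the paper reduces to $\ell=2$ and places an $H_2$-orbit inside $\fix(H_1)$, whereas you place an $H_1$-orbit inside $\bigcap_{i\ge 2}\fix(H_i)$.
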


\begin{proof} A simple induction reduces us to the case
$\ell=2$. Since $\fix(H_2)$ is non-empty, each $H_2$-orbit
is bounded. As $H_1$ and $H_2$ commute, $\fix(H_1)$ is
$H_2$-invariant and therefore contains an $H_2$-orbit. As
$\fix(H_1)$ is convex, the centre of this (bounded) orbit
is also in $\fix(H_1)$, and therefore is fixed by $H_1\cup H_2$.
\end{proof}

Building on these elementary observations, one can prove the following; see \cite{mrb:helly}.

\begin{proposition}[Bootstrap Lemma]\label{l:bootstrap}
Let $k_1,\dots,k_n$ be positive
integers and let $X$ be a complete \cat space of dimension less than $k_1+\dots+k_n$.
Let $S_1,\dots,S_n \subset\isom(X)$ be subsets with $[s_i,s_j]=1$
for all $s_i\in S_i$ and $s_j\in S_j\ (i\neq j)$.

If, for $i=1,\dots,n$, each $k_i$-element subset of $S_i$ has a
fixed point in $X$, then for some $i$ every
finite subset of   $S_i$ has a fixed point.
\end{proposition}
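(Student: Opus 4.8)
The plan is to argue by contradiction, combining the Helly-type dimension bound (Proposition~\ref{t:helly}) with the commutativity-to-common-fixed-point principle (Corollary~\ref{c:commutes}). Suppose the conclusion fails: for every $i$ there is a finite subset $F_i\subset S_i$ with no fixed point in $X$. The first step is to pass from $F_i$ to a minimal such subset and then invoke the hypothesis: since every $k_i$-element subset of $S_i$ has a fixed point, a minimal fixed-point-free $F_i$ must have at least $k_i+1$ elements; so I may choose $F_i\subset S_i$ with $|F_i|=k_i+1$ having no global fixed point, while every $k_i$-element subset of $F_i$ does fix a point. (If even a $(k_i+1)$-element subset is automatically fixed-point-free only when larger subsets are, one iterates; the point is simply that there is a finite fixed-point-free subset of $S_i$ all of whose proper subsets of size $\le k_i$ are not obstructed, and a compactness/minimality argument lets me assume it has exactly $k_i+1$ elements by first shrinking to a minimal offender and noting it has size $>k_i$, then — if necessary — arguing the whole of that minimal offender already fails with the relevant Helly count.) For clarity I will just take a minimal fixed-point-free finite $F_i\subset S_i$; minimality gives $|F_i|\ge k_i+1$.

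The second step applies Helly inside $S_i$. For each $s\in F_i$ the set $\fix(s)$ is closed and convex (fixed-point sets of isometries of \cat spaces are closed convex), and by minimality every proper subset of $F_i$ has a common fixed point, i.e. every collection of $|F_i|-1$ of the sets $\{\fix(s):s\in F_i\}$ meets. If $|F_i|-1\ge n_i+1$ where $n_i:=\dim X$ would then force the whole intersection nonempty by Proposition~\ref{t:helly}, contradicting fixed-point-freeness of $F_i$; hence $|F_i|-1\le n_i$, that is $|F_i|\le \dim X+1\le k_1+\dots+k_n$. So each $S_i$ contributes a fixed-point-free finite set $F_i$ with $|F_i|\le\dim X + 1$, but also $|F_i|\ge k_i+1$; summing the lower bounds gives nothing immediately, so the real leverage comes next.

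The third and decisive step is to play the $S_i$ off against each other using commutativity across blocks. Here is the mechanism: set $H_i:=\langle F_i\rangle$. If $H_i$ had a fixed point for even one $i$, say $i=n$, then — since $F_n$ generates $H_n$ and a group fixes a point iff its generating set has a common fixed point — that contradicts fixed-point-freeness of $F_n$; so in fact \emph{no} $H_i$ fixes a point, which is consistent but not yet contradictory. Instead, the argument must produce, for each $i$, a point fixed by a large-enough sub-collection and then merge across $i$ using Corollary~\ref{c:commutes}. Concretely, pick for each $i$ a subset $G_i\subsetneq F_i$ of size $k_i$; by hypothesis $\fix(G_i)\ne\emptyset$, and $\fix(G_i)$ is closed convex. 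The groups $\langle G_i\rangle$ pairwise commute for $i\ne j$ (every element of $S_i$ commutes with every element of $S_j$), so Corollary~\ref{c:commutes} gives a point $p\in\bigcap_{i=1}^n\fix(G_i)$. Now localize: the stabilizer of $p$ contains all of $\bigcup_i G_i$; examine the link / the action on a small sphere around $p$, or more simply intersect the convex sets $\fix(s)$ for $s\in F_i$ \emph{with each other} using that they all contain $p$ only for $s\in G_i$. The clean way is an induction on $\sum k_i$: using commutativity, replace $X$ by the (complete \cat) subspace $\fix(G_n)$ on which $S_1,\dots,S_{n-1}$ still act with commuting blocks, and observe the single remaining element $t\in F_n\setminus G_n$ acts on $\fix(G_n)$; the dimension of $\fix(G_n)$ is at most $\dim X - (\text{something} \ge 1)$ only if we know $\langle G_n\rangle$ moves points, which is where the size bound $|F_i|\le\dim X+1$ from step two is spent. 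Iterating the reduction across all blocks drives $\dim$ below $k_1+\dots+k_n$ by a total of $(k_1-1)+\dots+(k_n-1)$ or so, eventually reaching dimension $<$ the surviving parameter and forcing a fixed point of some full $F_i$, the contradiction.

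The main obstacle, and the part requiring genuine care rather than bookkeeping, is the dimension-drop inside the inductive reduction: one must show that passing from $X$ to $\fix(G_i)$ (when $\langle G_i\rangle$ has no fixed point issues but the $k_i$-subsets do) actually decreases topological covering dimension by the right amount, or alternatively set up the induction so that the parameters $k_i$ decrease in step with a controlled loss of dimension. This is exactly the content of the cited "ample duplication"/bootstrap machinery of \cite{mrb:helly}, and the honest statement is that the heart of the proof is an inductive interplay between Helly's theorem (to bound the size of a minimal obstruction in each block) and Corollary~\ref{c:commutes} (to merge partial fixed-point sets across commuting blocks), with the strict inequality $\dim X < k_1+\dots+k_n$ providing precisely the slack needed to close the induction.
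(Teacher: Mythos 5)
Your Steps 1 and 2 are fine and do extract real information: a minimal fixed-point-free finite subset $F_i\subset S_i$ satisfies $k_i+1\le |F_i|$, and Proposition~\ref{t:helly} applied to the closed convex sets $\fix(s)$, $s\in F_i$ (every proper subfamily of which meets, by minimality) forces $|F_i|\le \dim X+1$. Likewise the application of Corollary~\ref{c:commutes} to proper subsets $G_i\subsetneq F_i$ is legitimate. But the proof stops there: Step 3, which is where the hypothesis $\dim X<k_1+\dots+k_n$ must finally be contradicted, is not an argument. The one concrete mechanism you propose --- that passing from $X$ to the complete \cat\ subspace $\fix(G_n)$ drops the covering dimension by at least $1$ because $\langle G_n\rangle$ ``moves points'' --- is unjustified and is false in general: a nontrivial isometry of a \cat\ space can have a fixed-point set of full dimension (e.g.\ on the product of a tripod with $\R$, the involution swapping two legs fixes a subspace of dimension $2=\dim X$). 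Since fixed-point sets need not lose dimension, the proposed induction has no decreasing quantity, and the accounting (``by a total of $(k_1-1)+\dots+(k_n-1)$ or so'') never produces the contradiction with $\dim X<\sum k_i$. The remaining suggestions (``examine the link'', ``localize at $p$'') are not developed, and the final paragraph in effect concedes the point by deferring the heart of the argument back to \cite{mrb:helly}.

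For what it is worth, the paper itself does not prove this proposition either --- it is stated with a pointer to \cite{mrb:helly} --- so there is no in-paper argument to compare routes with. But as a standalone proof your proposal has a genuine gap: after the (correct) bounds $k_i+1\le|F_i|\le\dim X+1$ and the (correct) merging of the $\fix(G_i)$ across blocks, no contradiction is ever derived, and the dimension-drop step on which the whole reduction rests does not hold. To repair it you would need a genuinely new idea that converts the mutual commutation of the blocks into a dimension count --- this is exactly the ``ample duplication'' input from \cite{mrb:helly} that cannot be replaced by iterated restriction to fixed-point sets.
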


When applying the Bootstrap Lemma one has to
overcome the fact that the conclusion only applies
to {\em{some}} $S_i$.
 A convenient way of gaining more control
 is to restrict attention to conjugate sets.

\begin{corollary}[Conjugate Bootstrap]\label{c:conjug}
Let $k$ and $n$ be positive
integers and let $X$ be a complete \cat space of dimension less than $nk$.
Let $S_1,\dots,S_n$ be conjugates of a subset
$S\subset\isom(X)$ with $[s_i,s_j]=1$
for all $s_i\in S_i$ and $s_j\in S_j\ (i\neq j)$.

If each $k$-element subset of $S$ has a
fixed point in $X$, then so does each finite subset
of $S$.
\end{corollary}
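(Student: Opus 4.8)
The statement to prove is the Conjugate Bootstrap corollary: given that $S_1,\dots,S_n$ are conjugates of $S\subset\isom(X)$ that pairwise commute elementwise, and $\dim X < nk$, and each $k$-element subset of $S$ fixes a point, conclude that every finite subset of $S$ fixes a point. The plan is to deduce this directly from the Bootstrap Lemma (Proposition \ref{l:bootstrap}) by exploiting conjugacy to upgrade "some $S_i$" to "$S$ itself".

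First I would set $k_1=\dots=k_n=k$, so that $k_1+\dots+k_n = nk > \dim X$, and observe that the hypotheses of the Bootstrap Lemma are met: the $S_i$ pairwise commute elementwise by assumption, and each $k$-element subset of each $S_i$ has a fixed point. Indeed, if $S_i = g_i S g_i^{-1}$ and $F\subset S$ has $|F|=k$ with fixed point $x$, then $g_i F g_i^{-1}$ has $|g_i F g_i^{-1}|=k$ and fixes $g_i.x$; so every $k$-element subset of every $S_i$ has a fixed point. The Bootstrap Lemma therefore yields some index $i$ such that every finite subset of $S_i$ has a fixed point. Pulling back by the conjugating element, every finite subset of $S = g_i^{-1} S_i g_i$ has a fixed point: if $F'\subset S_i$ is finite with fixed point $y$, then $g_i^{-1} F' g_i$ fixes $g_i^{-1}.y$, and every finite subset of $S$ arises this way. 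This is exactly the desired conclusion.

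The main point to be careful about is simply the bookkeeping of conjugation: one must check that "$S_i$ is a conjugate of $S$" is used symmetrically, i.e.\ that a fixed point for a finite subset of the distinguished $S_i$ transports back to a fixed point for the corresponding finite subset of $S$, regardless of \emph{which} $i$ the Bootstrap Lemma hands us. Since conjugation by $g_i$ is a bijection $\isom(X)\to\isom(X)$ carrying $S$ onto $S_i$ and carrying the isometry $x\mapsto g_i.x$ appropriately, finite subsets of $S$ correspond bijectively to finite subsets of $S_i$ with fixed-point sets related by the isometry of $X$ induced by $g_i$; in particular one has a fixed point iff the other does. I do not expect any genuine obstacle here — the corollary is a formal consequence of the lemma — the only thing to state cleanly is that the conjugacy hypothesis makes the alternative "for some $i$" harmless, because all the $S_i$ are interchangeable with $S$.
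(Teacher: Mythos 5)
Your proposal is correct and matches the paper's intent exactly: the paper states the Conjugate Bootstrap without proof as an immediate consequence of the Bootstrap Lemma, and the argument it has in mind is precisely yours --- take $k_1=\dots=k_n=k$, transfer the fixed-point hypothesis to each $S_i$ by conjugation, and use conjugacy again to convert the Lemma's ``for some $i$'' conclusion back into a statement about $S$. Your bookkeeping of the conjugation (via $\fix(g_iFg_i^{-1})=g_i.\fix(F)$) is the only point needing care, and you handle it correctly.
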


\section{Some surface topology}\label{s:g-bound}

The reader will  recall that, given a
closed orientable surface $\Si$ and two compact
homeomorphic
sub-surfaces with boundary $T,T'\subset \Si$,
there exists an automorphism of $\Si$ taking $T$
to $T'$ if and only
$\Si\ssm T$ and
$\Si\ssm T'$ are homeomorphic. In particular,
two homeomorphic sub-surfaces are in the
same orbit under the action of ${\rm{Homeo}}(\Si)$ if the
complement of each is connected.

The relevance of this observation to our purposes
is explained by the following lemma, which will be used in tandem with the Conjugate Bootstrap.

\begin{lemma}\label{l:disjoint}  Let $H$ be the
subgroup of ${\rm{Mod}}(\Sigma)$ generated by the Dehn
twists in a set $C$ of loops
 all of which are
contained in a compact sub-surface $T\subset\Si$
with connected complement.
If $\Si$ contains $m$ mutually disjoint sub-surfaces
$T_i$ homeomorphic to $T$, each with connected
complement, then ${\rm{Mod}}(\Sigma)$  contains $m$
mutually-commuting conjugates $H_i$ of $H$.
\end{lemma}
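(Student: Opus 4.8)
The plan is to build the commuting conjugates $H_i$ directly from the disjoint sub-surfaces $T_i$, using the surface-topology observation recalled just above the lemma. First I would fix, for each $i$, a homeomorphism $f_i\colon T\to T_i$. Since $\Sigma\ssm T$ and $\Sigma\ssm T_i$ are both connected and $T,T_i$ are homeomorphic compact sub-surfaces, the recalled fact gives an automorphism $\phi_i$ of $\Sigma$ whose restriction to $T$ agrees (up to isotopy) with $f_i$; write $g_i\in\mathrm{Mod}(\Sigma)$ for its mapping class, so that $g_i$ carries each loop $c\in C$ to a loop $f_i(c)\subset T_i$. Then I would set $H_i := g_i\,H\,g_i^{-1}$.

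Next I would check that each $H_i$ is the subgroup generated by the Dehn twists in the loop system $f_i(C)\subset T_i$. This is just the standard conjugation formula for Dehn twists: $g\,T_c\,g^{-1}=T_{g(c)}^{\pm1}$, so conjugating the generating twists of $H$ by $g_i$ yields the twists in $f_i(C)$, and hence $H_i$ is generated by Dehn twists supported in $T_i$.

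Finally I would verify that $H_i$ and $H_j$ commute for $i\neq j$. Because $T_i$ and $T_j$ are disjoint, the loops in $f_i(C)$ and $f_j(C)$ can be realized disjointly, and disjoint simple closed curves have commuting Dehn twists; since commuting generators generate commuting subgroups, $[H_i,H_j]=1$. (One should note that $H_i$ depends on the choice of $\phi_i$, but any choice works, so this is harmless.)

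The main obstacle — really the only substantive point — is the appeal to the recalled surface-topology fact: one needs the two sub-surfaces $T$ and $T_i$ to be not merely abstractly homeomorphic but in the same $\mathrm{Homeo}(\Sigma)$-orbit, which is exactly why the hypothesis demands that \emph{each} $T_i$ have connected complement (matching the connected complement of $T$). Once that is in hand, everything else is the elementary bookkeeping of how Dehn twists behave under conjugation and the fact that disjoint curves give commuting twists; no further estimates or constructions are needed. I would therefore spend the bulk of the write-up making the orbit statement precise and devote only a sentence or two to the conjugation and commutation computations.
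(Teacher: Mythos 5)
Your proposal is correct and follows essentially the same route as the paper: use the connected-complement hypothesis to produce a homeomorphism of $\Sigma$ carrying $T$ to $T_i$, define $H_i$ as the resulting conjugate (equivalently, the group generated by the twists in the image loops), and conclude commutation from the disjointness of the supports. The extra detail about realizing a prescribed $f_i$ up to isotopy is unnecessary but harmless, as you note.
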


\begin{proof}
Since the complement  of each
$T_i$ is connected,
there is a homeomorphism $\phi_{i}$ of $\Si$ carrying
$T$ to $T_i$. Define $H_i$ to be the subgroup
of ${\rm{Mod}}(\Sigma)$  generated by the Dehn twists in the
loops $\phi_i(C)$. Since the various
$H_i$ are supported in disjoint sub-surfaces, they
commute.
\end{proof}

\subsection{The Lickorish generators}

\def\L{{\rm{Lick}}}
Raymond Lickorish \cite{lick}
proved that the mapping class
group of a closed orientable surface of genus $g$
is generated by the Dehn twists in  $3g-1$ non-separating
loops, each pair of which
intersects in at most one point.
Let $\L$ denote this set of loops.

We say that a subset
$S\subset \L$ is {\em{connected}} if the union
$U(S)$ of the
loops in $S$ is connected. An analysis of $\L$
reveals the following fact, whose proof is deferred to \cite{mrb:g-bound}.
In this statement all sub-surfaces are assumed to be compact.

\begin{proposition}\label{p:lick}
Let $S\subset \L$ be a connected
subset.
\begin{enumerate}
\item
If $|S|=2\ell$ is even, then $U(S)$ is either contained
in a  sub-surface of genus  $\ell$ with $1$ boundary
component, or else  in a non-separating sub-surface
of genus at most $\ell-1$ with $3$ boundary components.

\item
If $|S|=2\ell+1$ is odd, then $U(S)$ is either contained
in a non-separating subsurface of genus $\ell$ with
at most $2$ boundary components, or else
in a non-separating sub-surface of genus
at most $(\ell-1)$ that has at most $3$ boundary components.
\end{enumerate}
\end{proposition}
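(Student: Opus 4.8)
The plan is to exploit the rigid combinatorics of $\L$. Recall its structure: $\L$ consists of a chain $\a_1,\b_1,\g_1,\b_2,\g_2,\dots,\g_{g-1},\b_g$ of $2g$ loops --- consecutive loops meeting transversally in one point and non-consecutive loops being disjoint --- together with pendant loops $\a_2,\dots,\a_g$, where $\a_i$ meets $\b_i$ in one point and is disjoint from every other loop of $\L$. Thus the graph $\mathcal G$ with vertex set $\L$ and one edge for each intersecting pair is a \emph{caterpillar}: its spine is the chain and its leaves are the $\a_i$. A subset $S\subset\L$ is connected precisely when the induced subgraph $\mathcal G[S]$ is connected, i.e.\ a subtree of $\mathcal G$; since $\mathcal G$ is a caterpillar, every such subtree is (apart from the degenerate case $|S|=1$) a sub-path of the spine together with a chosen collection of pendants hanging off its vertices. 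This gives a short, uniform list of ``shapes'' for $S$, indexed by the length of the sub-path and the chosen pendants, and the argument verifies the assertion shape by shape.

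First I would fix $S$ and determine the topology of the regular neighbourhood $N(S)$ of $U(S)$. Because $\mathcal G[S]$ is a tree, the loops of $S$ can be ordered so that each one crosses exactly one of its predecessors and is disjoint from the others; building $U(S)$ up in this order is a sequence of wedge-type attachments, so $\chi(U(S))=1-|S|$ and the classes $[\g]$ of the loops $\g\in S$ form a free basis of $H_1(U(S);\Z)\cong\Z^{|S|}$. Hence $N(S)$ is a compact orientable subsurface of $\S$ whose genus $h(S)$ and number of boundary components $b(S)$ satisfy $2h(S)+b(S)=|S|+1$.

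Next I would pin down $h(S)$ via the intersection form. For any compact subsurface $F\subseteq\S$ the genus of $F$ equals half the rank of the algebraic intersection form $\hat\imath$ of $\S$ restricted to the image of $H_1(F)\to H_1(\S)$ (intersection numbers are local, and the form on a genus-$h$ surface-with-boundary has rank $2h$). Applied to $F=N(S)$ this gives $h(S)=\tfrac12\,\mathrm{rank}(\hat\imath|_{V_S})$, where $V_S\subseteq H_1(\S;\R)$ is spanned by the $[\g]$, $\g\in S$. Now read off these classes: the $[\a_i]$ and $[\b_i]$ form a symplectic basis with $\hat\imath([\a_i],[\b_i])=\pm1$, while $[\g_i]=\pm[\a_i]\pm[\a_{i+1}]$ (forced by the crossings of $\g_i$ with $\b_i$ and $\b_{i+1}$ and its disjointness from every $\a_j$). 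So $V_S$ is the sum of the isotropic subspace spanned by the $[\b_i]$ with $\b_i\in S$ and the isotropic subspace spanned by the $[\a_i]$ and $[\g_i]$ with $\a_i,\g_i\in S$; its form-rank is twice the rank of the pairing between these two halves --- an explicit small number depending only on how many handles $S$ touches and on whether, for each such handle, both the $\a$- and the $\b$-class occur. Feeding this into $2h(S)+b(S)=|S|+1$ yields $h(S)$ and $b(S)$ for each shape: in the even case $|S|=2\ell$ one gets $h(S)\le\ell$, with $h(S)=\ell$ exactly when $b(S)=1$ (the first alternative), and otherwise $h(S)\le\ell-1$ with $b(S)$ small (the second alternative); the odd case is analogous. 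The ``non-separating'' assertions come for free from the linear layout: each loop of $S$ lies in the union $T$ of the handles it meets together with the strips joining consecutive such handles, and $\S\ssm T$ is connected, so $U(S)$ sits in a non-separating sub-surface of the asserted type.

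The step I expect to be the real obstacle is this last one --- controlling $b(S)$ \emph{uniformly} over all connected $S$. One must track, for every shape, both the failure of $H_1(N(S))\to H_1(\S)$ to be injective (caused by the relations $[\g_i]=\pm[\a_i]\pm[\a_{i+1}]$ when several of the relevant $\a$- and $\g$-loops lie in $S$) and the degeneracy of the form on the image, and then, when the resulting $b(S)$ exceeds the target count, absorb the surplus boundary circles of $N(S)$ into a slightly larger sub-surface whose extra complementary pieces contribute no unwanted genus --- so that the enlargement still has genus at most $\ell$ (resp.\ $\ell-1$) and at most the stated number of boundary components. Everything else is a finite case check dictated by the caterpillar picture.
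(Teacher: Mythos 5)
The first thing to say is that the paper does not actually prove this proposition: it states it and explicitly defers the proof to \cite{mrb:g-bound}. So there is no in‑paper argument to compare yours against, and I can only assess your plan on its own terms.

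Your set‑up is correct and genuinely useful: the intersection graph of the Lickorish curves is a caterpillar tree, connected subsets $S$ are subtrees, the regular neighbourhood $N(S)$ has $\chi=1-|S|$, and its genus is half the rank of the intersection form on the span $V_S$ of the classes of the curves in $S$ (the kernel of $H_1(N(S))\to H_1(\Sigma)$ lies in the radical of the form, so the rank survives). The identities $[c_i]=\pm[a_i]\pm[a_{i+1}]$ and $2h+b=|S|+1$ are also right. The gap is exactly where you place it, but it is more serious than your sketch suggests. Take $S=\{b_2,b_3,b_4,\;c_1,c_2,c_3,c_4,\;a_2,a_3,a_4\}$: this is connected, $|S|=10$, so $\ell=5$, and the pairing between $\langle [b_2],[b_3],[b_4]\rangle$ and the isotropic subspace $\langle [a_1],\dots,[a_5]\rangle$ has rank $3$; hence $N(S)$ has genus $3$ and \emph{five} boundary components, which is of neither shape in the statement, and $b(S)$ is not ``small''. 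Your proposed repair --- absorbing surplus boundary circles into a larger subsurface ``whose extra complementary pieces contribute no unwanted genus'' --- cannot work as described: joining two boundary circles of an orientable subsurface by a band or an annulus running through the complement always raises the genus by one ($\chi$ and $2h+b$ bookkeeping forces this). So reducing $b$ from $2k+1$ to $3$ by tubing raises the genus from $\ell-k$ to $\ell+k-2$, which violates the bound $\ell-1$ as soon as $k\ge 2$ --- precisely the situation in the example. The example is still consistent with the proposition: $U(S)$ lies in the genus‑$4$ subsurface consisting of handles $1$ through $4$ together with a band carrying the excursion of $c_4$ into handle $5$, and this has at most $3$ boundary components and connected complement. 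But one finds that subsurface by recording which handles and connecting strips the curves of $S$ enter, not by surgering $N(S)$. That direct construction, together with the count relating $|S|$ to the number of handles a connected $S$ can meet, is the missing step; without it your plan does not yet establish the stated genus and boundary bounds simultaneously.
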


 \subsection{The Proof of Theorem D: an outline}

We must argue that when  the mapping class group of a
closed orientable surface of genus $g$ acts
without neutral parabolics
on a complete {\rm{CAT}}$(0)$ space $X$ of dimension less than $g$ it must fix a point.

The case $g=1$ is trivial. A complete \cat\ space
of dimension 1 is an $\mathbb R$-tree, so for $g=2$ the assertion
of the theorem is that the mapping class group of a genus 2
surface has property ${\rm{F}}\R$. This was proved by
Culler and Vogtmann \cite{CV}.

Assume $g\ge 3$. According to Corollary \ref{c:basic}, we will be
done if we can show that each subset $S\subset\L$ with $|S|\le g$
has a fixed point in $X$. We proceed by induction on $|S|$.
 Theorem \ref{t:paras} covers the base case $|S|=1$.

If $S$ is not connected, say
$S=S_1\cup S_2$ with $U(S_1)\cap U(S_2)=\emptyset$, then
the subgroups $\langle S_1\rangle$ and $\langle
S_2\rangle$
commute. Each has a
fixed point since $|S_i|<|S|$, so
Corollary \ref{c:commutes} tells us that $S$
has a fixed point.

Suppose now that $S$ is connected.
If $|S|=2\ell$ is even then Proposition \ref{p:lick} tells us
that $U(S)$ is contained either in a sub-surface of genus
$\ell$ with $1$ boundary component or else in a
non-separating  sub-surface of genus
$\ell-1$ with $3$ boundary components.
In either case
 one can fit $\lfloor g/\ell\rfloor$ disjoint
copies of this sub-surface into $\Si_g$.
Lemma \ref{l:disjoint} then provides us with $\lfloor g/\ell\rfloor$
mutually-commuting conjugates of $\langle S\rangle$.
As all proper subsets of $S$ are assumed to have a fixed point
and the dimension of
$X$ is  less than $(2\ell -1)\lfloor g/\ell\rfloor$,
the Conjugate Bootstrap (Corollary \ref{c:conjug}) tells us
that $S$ will have a fixed point.

The argument for $|S|$ odd is similar.
For  a detailed proof, see \cite{mrb:g-bound}.
\hfill$\square$

\section{A proper semisimple action of ${\rm{Mod}}(\Si_2)$}\label{s:genus2}

\begin{theorem}
The mapping class group of a closed surface of genus $2$ acts properly by
semisimple isometries on a complete \cat space of dimension
$18$.
\end{theorem}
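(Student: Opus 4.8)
The plan is to build the CAT$(0)$ space from a well-chosen finite-index subgroup together with the homology action, exploiting the fact that $\mathrm{Mod}(\Sigma_2)$ is, up to finite index, remarkably close to an arithmetic group. First I would recall that the hyperelliptic involution $\iota$ is central in $\mathrm{Mod}(\Sigma_2)$, so the symplectic representation $\mathrm{Mod}(\Sigma_2)\to\mathrm{Sp}(4,\Z)$ has kernel exactly $\langle\iota\rangle$ and image of finite index; thus $\mathrm{Mod}(\Sigma_2)/\langle\iota\rangle$ is commensurable with $\mathrm{Sp}(4,\Z)$. The symmetric space $Y$ associated to $\mathrm{Sp}(4,\R)$ is a complete CAT$(0)$ manifold of dimension $10$, and $\mathrm{Sp}(4,\Z)$ acts on it properly by semisimple isometries (isometries of a symmetric space of non-compact type are always semisimple; see the discussion of E.~Cartan's theorem in the introduction). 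The obstacle, foreshadowed in the introduction's remark about Dehn twists acting as neutral parabolics in the $\mathrm{Sp}(2g,\R)$-action, is that this action is \emph{not} proper on the nose on $\mathrm{Mod}(\Sigma_2)$: the hyperelliptic involution acts trivially, and one must also ensure the Dehn twists do not become parabolic. So $Y$ alone is not enough, and one must enlarge the space to ``see'' the kernel $\langle\iota\rangle$ and to rigidify the action.

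The key step is to promote the symmetric-space action to a proper one by taking a product with a second CAT$(0)$ space on which a finite-index subgroup acts properly and cocompactly, then inducing. Concretely, I would choose a torsion-free finite-index subgroup $\Gamma<\mathrm{Mod}(\Sigma_2)$ (these exist: the level-3 congruence subgroup of $\mathrm{Sp}(4,\Z)$ is torsion-free and its preimage misses $\iota$). Then $\Gamma$ acts freely properly discontinuously on $Y$, and since $\Gamma$ is the fundamental group of an aspherical space, one can arrange a proper cocompact action of $\Gamma$ on a CAT$(0)$ space—here it is cleaner instead to keep $Y$ and use that $\Gamma\backslash Y$ is a finite-volume locally symmetric manifold, then apply a Borel–Serre-type thickening or, more simply, to observe that $Y\times\E^N$ supports a proper $\Gamma$-action for suitable $N$ after twisting by a suitable torsion-free representation to a crystallographic-type group. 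I would then apply the multiplicative induction construction described in the footnote to Remark~\ref{r:abelian}: if $\Gamma<\mathrm{Mod}(\Sigma_2)$ has index $n$ and acts properly by semisimple isometries on a complete CAT$(0)$ space $Z$ of dimension $d$, then $\mathrm{Mod}(\Sigma_2)$ acts on $Z^n$ properly by semisimple isometries (properness is inherited since a power of each group element preserves the factors and acts properly there), and $\dim Z^n = nd$.

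The hardest part will be bookkeeping the dimension: one needs $nd=18$, so the budget is tight and the naive estimates ($n$ large, $d\ge 10$) blow up badly. The way to make $18$ work is \emph{not} to induce from the symmetric space directly but to realise $\mathrm{Mod}(\Sigma_2)$ itself—no finite-index passage—as acting properly on a CAT$(0)$ \emph{cube complex} or a product of trees and Euclidean/symmetric factors of total dimension $18$, using the structure of $\mathrm{Mod}(\Sigma_2)$ as a central extension of $\mathrm{Sp}(4,\Z)$ by $\Z/2$ together with the well-known fact that $\mathrm{Sp}(4,\Z)$ (indeed any lattice in a product of rank-one and higher-rank factors, via its action on the associated symmetric space and Bruhat–Tits buildings at finitely many primes) acts properly on a finite-dimensional CAT$(0)$ space. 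So the proof reduces to: (i) decompose $\mathrm{Mod}(\Sigma_2)$ via the central $\Z/2$; (ii) act properly on the $10$-dimensional symmetric space for $\mathrm{Sp}(4,\R)$ modulo the kernel; (iii) restore properness across $\langle\iota\rangle$ and handle unipotents by adjoining a product of Bruhat–Tits trees for $\mathrm{Sp}(4,\Q_p)$ at a couple of primes (each contributing dimension $1$ per tree, total small) and possibly a Euclidean factor, arranging the dimensions to sum to exactly $18$; (iv) check completeness and semisimplicity of the product action, the latter being automatic on the symmetric-space and building factors (polyhedral with finitely many cell types, cf.~the introduction's citation of \cite{mrb:ss}) and on Euclidean factors. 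Verifying that the prime-by-prime product is \emph{proper} on all of $\mathrm{Mod}(\Sigma_2)$—equivalently that only finitely many elements fix a given ball—is the genuine obstacle, and it is exactly here that one invokes a congruence/strong-approximation input for $\mathrm{Sp}(4,\Z)$; the dimension count $10+2\cdot(\text{tree dim})+(\text{Euclidean dim})=18$ then just has to be made to balance, which I would treat as a routine final tally.
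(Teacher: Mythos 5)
Your proposal founders at its very first step. The symplectic representation $\mathrm{Mod}(\Sigma_2)\to \mathrm{Sp}(4,\mathbb Z)$ does \emph{not} have kernel $\langle\iota\rangle$: the hyperelliptic involution acts on $H_1(\Sigma_2)$ as $-I$, so it is not in the kernel at all, and the actual kernel is the Torelli group, which in genus $2$ is an infinitely generated free group (Mess). So $\mathrm{Mod}(\Sigma_2)/\langle\iota\rangle$ is nowhere near commensurable with $\mathrm{Sp}(4,\mathbb Z)$. You appear to have conflated the homology representation with the Birman--Hilden map $\mathrm{Mod}(\Sigma_2)\to\mathrm{Mod}(\Sigma_{0,6})$ coming from the quotient orbifold $\Sigma_2/\langle\tau\rangle$ (a sphere with $6$ marked points); \emph{that} map is surjective with kernel exactly $\langle\tau\rangle$, and it is the correct starting point. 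Moreover, even granting your (false) commensurability, the route through $\mathrm{Sp}(4,\mathbb Z)$ cannot be repaired by adjoining Bruhat--Tits buildings and Euclidean factors: Dehn twists in non-separating curves map to transvections, which are exponentially distorted in the higher-rank lattice $\mathrm{Sp}(4,\mathbb Z)$ (Lubotzky--Mozes--Raghunathan), so they have translation length $0$ in \emph{every} isometric action on a metric space; in a semisimple action they are therefore elliptic of infinite order, which rules out properness. The buildings contribute nothing because $\mathrm{Sp}(4,\mathbb Z)\subset\mathrm{Sp}(4,\mathbb Z_p)$ is bounded in $\mathrm{Sp}(4,\mathbb Q_p)$ and hence fixes a point of each building. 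The remaining steps of your sketch (``Borel--Serre-type thickening'', ``twisting by a crystallographic-type group'', the final ``routine tally'' to reach $18$) are not arguments.

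The proof in the paper goes a different way. From the Birman--Hilden exact sequence $1\to\mathbb Z_2\to\mathrm{Mod}(\Sigma_2)\to\mathrm{Mod}(\Sigma_{0,6})\to 1$, one passes to the braid group: $B_5$ maps onto an index-$6$ subgroup of $\mathrm{Mod}(\Sigma_{0,6})$ with kernel its infinite cyclic centre. By Brady--McCammond (and Krammer), $B_5$ is the fundamental group of a compact non-positively curved piecewise-Euclidean complex of dimension $4$ with no free faces, so its universal cover splits isometrically as $Y\times\mathbb R$ with $\dim Y=3$, and $B_5$ modulo its centre acts properly by semisimple isometries on $Y$. Multiplicative induction from this index-$6$ subgroup (as in Remark \ref{r:abelian}) yields a proper semisimple action of $\mathrm{Mod}(\Sigma_{0,6})$ on $Y^6$, of dimension $6\times 3=18$, and properness survives pullback to $\mathrm{Mod}(\Sigma_2)$ because the kernel $\langle\tau\rangle$ is finite. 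That is where the number $18$ comes from; it is not a tally of symmetric-space and building dimensions.
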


\def\M{{\rm{Mod}}}

\begin{proof}
The hyperelliptic involution $\tau$ is central in $\M(\Si_2)$.
The quotient orbifold $\Si_2/\<\tau\>$ is a sphere with $6$
marked points and the
action of $\M(\Si_2)$ on this quotient
 induces a homomorphism $\M(\Si_2)\to \M(\Si_{0,6})$. This
 is onto \cite{BiHi} and the kernel is $\langle\tau\rangle$.
Thus we have a short exact
sequence
$$
1\to \Z_2 \to \M(\Si_2)\to  \M(\Si_{0,6})\to 1.
$$

For each positive integer $n\ge 2$ there is
a natural homomorphism from the braid group $B_n$
to $\M(\Si_{0,{n+1}})$; the image of this map is of index
$(n+1)$ and the kernel
is the centre of $B_n$, which is infinite cyclic.
This map
admits the following geometric interpretation. Regard $B_n$ as the
mapping class group of the $n$ punctured disc $D$
with $1$ boundary
component. One maps $B_n$ to $\M(\Si_{0,{n+1}})$ by
attaching the boundary of a once-punctured disc to $\partial D$
and extending homeomorphisms of $D$ by the identity on the
attached disc. The image of $B_n$ is the subgroup of
$\M(\Si_{0,{n+1}})$ that stabilizes the puncture in the added disc;
this  has index $n+1$.
The centre of $B_n$ is the mapping class of
the Dehn twist $\zeta$ in a loop
parallel to the boundary $\partial D$. This twist
becomes trivial in $\M(\Si_{0,{n+1}})$, and  it
generates the kernel of $B_n\to\M(\Si_{0,{n+1}})$.
Thus we have a second short exact sequence
$$
1\to \Z \to B_n \to \G\to 1
$$ where $\G\subset \M(\Si_{0,{n+1}})$ is of index $n+1$.

Brady and McCammond \cite{brady} and independently
Krammer (unpublished), showed that $B_5$
is the fundamental group of a compact non-positively curved piecewise-Euclidean
complex $X$ of dimension $4$ that has no free faces.
It follows from \cite{BH} II.6.15(1) and II.6.16 that the
universal cover of $X$ splits isometrically
as a product $Y\times\R$ and that the
quotient of $B_5$ by its centre acts properly on $Y$
(II.6.10(4) {\em{loc. cit.}})
by semisimple isometries (II.6.9 {\em{loc. cit.}}).
By inducing, as in remark \ref{r:abelian},
 we obtain
an action of  $\M(\Si_{0,6})$ on $Y^6$ that is again proper and
semisimple. And since the kernel
of $\M(\Si_2)\to\M(\Si_{0,6})$ is finite, the resulting
action of $\M(\Si_2)$ on $Y^6$ is also proper.
\end{proof}

\end{document}